\documentclass[11pt,leqno]
{amsart}
\usepackage{amsmath,epsfig,graphicx,color}

\usepackage{dsfont}
\usepackage{ifthen}
\usepackage{amssymb,latexsym}
\usepackage{subfigure}
\usepackage{hyperref}
\hypersetup{
urlcolor=black, 
  menucolor=black, 
  citecolor=black, 
  anchorcolor=black, 
  filecolor=black, 
  linkcolor=black, 
  colorlinks=true,
}
\usepackage{comment}
\usepackage[numbers,sort&compress]{natbib} 
\newcommand{\cip}{\stackrel{\P}{\rightarrow}}
\newcommand{\cas}{\stackrel{\rm a.s.}{\rightarrow}}

\newcommand{\dint}{\,\mathrm{d}}
\newcommand{\x}{\mathbf{x}}
\newcommand{\X}{\mathbf{X}}
\newcommand{\limn}{\lim_{n \to \infty}}

\textwidth 6.50in
\topmargin -0.50in
\oddsidemargin 0in
\evensidemargin 0in
\textheight 9.00in
\definecolor{darkblue}{rgb}{.2, 0.2,.8}
\definecolor{darkgreen}{rgb}{0,0.5,0.3}
\definecolor{darkred}{rgb}{.8, .1,.1}

\newcommand{\blue}{\color{darkblue}}

\newcommand{\ex}{{\rm e}\,}

\newcommand{\asy}{asymptotic}

\newtheorem{lemma}{Lemma}[section]

\newtheorem{theorem}[lemma]{Theorem}
\newcommand{\Leb}{{\mathbb L}{\mathbb E}{\mathbb B}}

\newtheorem{proposition}[lemma]{Proposition}
\newtheorem{definition}[lemma]{Definition}
\newtheorem{corollary}[lemma]{Corollary}
\newtheorem{example}[lemma]{Example}
\newtheorem{exercise}[lemma]{Exercise}
\newtheorem{remark}[lemma]{Remark}
\newtheorem{fig}[lemma]{Figure}
\newtheorem{tab}[lemma]{Table}

\newcommand{\cid}{\stackrel{d}{\rightarrow}}

\newcommand{\bfR}{{\bf R}}

\newcommand{\bth}{\begin{theorem}}
\newcommand{\ethe}{\end{theorem}}

\newcommand{\bre}{\begin{remark}\em }
\newcommand{\ere}{\end{remark}}

\newcommand{\ble}{\begin{lemma}}
\newcommand{\ele}{\end{lemma}}

\newcommand{\pp}{point process}
\newcommand{\bde}{\begin{definition}}
\newcommand{\ede}{\end{definition}}
\newcommand{\bco}{\begin{corollary}}
\newcommand{\eco}{\end{corollary}}

\newcommand{\bpr}{\begin{proposition}}
\newcommand{\epr}{\end{proposition}}

\newcommand{\bexer}{\begin{exercise}}
\newcommand{\eexer}{\end{exercise}}

\newcommand{\bexam}{\begin{example}}
\newcommand{\eexam}{\end{example}}

\newcommand{\bfi}{\begin{fig}}
\newcommand{\efi}{\end{fig}}

\newcommand{\btab}{\begin{tab}}
\newcommand{\etab}{\end{tab}}

\newcommand{\rv}{random variable}

\newcommand{\var}{{\rm var}}

\newcommand{\as}{{\rm a.s.}}

\newcommand{\rhs}{right-hand side}
\newcommand{\df}{distribution function}

\newcommand{\beao}{\begin{eqnarray*}}
\newcommand{\eeao}{\end{eqnarray*}\noindent}

\newcommand{\beam}{\begin{eqnarray}}
\newcommand{\eeam}{\end{eqnarray}\noindent}

\newcommand{\beqq}{\begin{equation}}
\newcommand{\eeqq}{\end{equation}\noindent}

\newcommand{\bce}{\begin{center}}
\newcommand{\ece}{\end{center}}

\newcommand{\diag}{{\rm diag}}
\newcommand{\barr}{\begin{array}}
\newcommand{\earr}{\end{array}}

\newcommand{\stp}{\stackrel{\P}{\rightarrow}}
\newcommand{\std}{\stackrel{d}{\rightarrow}}

\newcommand{\eqd}{\stackrel{d}{=}}

\newcommand{\vague}{\stackrel{\lower0.2ex\hbox{$\scriptscriptstyle
                    \it{v} $}}{\rightarrow}}
\newcommand{\weak}{\stackrel{\lower0.2ex\hbox{$\scriptscriptstyle
                    \it{w} $}}{\rightarrow}}
\newcommand{\what}{\stackrel{\lower0.2ex\hbox{$\scriptscriptstyle
                    \it{\hat{w}} $}}{\rightarrow}}

\newcommand{\bdis}{\begin{displaymath}}
\newcommand{\edis}{\end{displaymath}\noindent}

\newcommand{\N}{\mathbb{N}}
\newcommand{\R}{\mathbb{R}}

\newcommand{\nto}{n\to\infty}
\newcommand{\kto}{k\to\infty}
\newcommand{\xto}{x\to\infty}

\newcommand{\ov}{\overline}
\newcommand{\wt}{\widetilde}
\newcommand{\wh}{\widehat}
\newcommand{\vep}{\varepsilon}

\newcommand{\regvary}{regularly varying}
\newcommand{\slvary}{slowly varying}
\newcommand{\regvar}{regular variation}

\newcommand{\bbr}{{\mathbb R}}

\newcommand{\con}{convergence}

\newcommand{\st}{such that}
\newcommand{\fif}{if and only if}

\newcommand{\fct}{function}

\newcommand{\ds}{distribution}

\newcommand{\cmt}{continuous mapping theorem}
\newcommand{\seq}{sequence}

\newcommand{\pro}{probabilit}

\newcommand{\ms}{measure}
\newcommand{\mgf}{moment generating function}
\newcommand{\ld}{large deviation}

\newcommand{\bfX}{{\bf X}}

\newcommand{\bfS}{{\bf S}}

\newcommand{\bfI}{{\bf I}}

\newcommand{\E }{{\mathbb E}}
\renewcommand{\P }{{\mathbb P}}

\newcommand{\1}{\mathds{1}}

\allowdisplaybreaks

\DeclareMathOperator{\e}{e}

\newcommand{\MP}{Mar\v cenko--Pastur }

\newcommand{\norm}[1]{\|#1\|}

\begin{document}
\today
\bibliographystyle{acm}
\title[The off-diagonal point process]{Point process convergence for the off-diagonal entries of  sample covariance matrices}
\thanks{Thomas Mikosch's research is partly support by an Alexander von 
Humboldt Research Award. He takes pleasure in thanking the Faculty of
Mathematics of Ruhruniversit\"at Bochum for hosting him in the period December 2018--May 2019. The research of TM and Jorge Yslas is also supported
by Danmarks Frie Forskningsfond Grant No 9040-00086B.\\
Johannes Heiny was supported by the Deutsche Forschungsgemeinschaft (DFG) via RTG 2131 High-dimensional Phenomena in Probability - Fluctuations and Discontinuity.}

\author[J. Heiny]{Johannes Heiny}
\address{Fakult\"at f\"ur Mathematik,
Ruhruniversit\"at Bochum,
Universit\"atsstrasse 150,
D-44801 Bochum,
Germany}
\email{johannes.heiny@rub.de}
\author[T. Mikosch]{Thomas Mikosch}
\address{Department  of Mathematics,
University of Copenhagen,
Universitetsparken 5,
DK-2100 Copenhagen,
Denmark}
\email{mikosch@math.ku.dk}
\author[J. Yslas]{Jorge Yslas}
\address{Department  of Mathematics,
University of Copenhagen,
Universitetsparken 5,
DK-2100 Copenhagen,
Denmark}
\email{jorge.yslas1@gmail.com}

\begin{abstract}
We study  \pp\ \con\ for \seq s of iid random walks. The objective
is to derive \asy\ theory for the extremes of these random walks.
We show \con\ of the maximum random walk to the Gumbel \ds\ under the existence 
of a $(2+\delta)$th moment. We make heavily use of precise \ld\
results for sums of iid \rv s. As a con\seq , we derive the joint 
convergence of the off-diagonal entries in sample covariance and 
correlation matrices of a high-dimensional sample 
whose dimension increases with the sample size.
This generalizes known results on the \asy\ Gumbel property of 
the largest entry.
\end{abstract}
\keywords{Gumbel distribution, extreme value theory, maximum entry, sample covariance matrix}
\subjclass{Primary 60G70; Secondary 60B20, 60G50, 60F10, 62F05}
\maketitle

\section{Introduction}\label{sec:intro}

\subsection{Motivation}
An accurate probabilistic understanding of covariances and correlations is often the backbone of a thorough statistical data analysis. In many contemporary applications, one is 
faced with large data sets where both the dimension of the observations and the sample size are large. 
A major reason lies in the rapid improvement of computing power and data collection devices which has triggered the necessity to study and interpret the sometimes overwhelming amounts of data in an efficient and tractable way. Huge data sets arise naturally in genome sequence data in biology, online networks, wireless communication, large financial portfolios, and natural sciences. More applications where the dimension $p$ might be of the same or even higher magnitude than the sample size $n$ are discussed in \cite{donoho:2000,johnstone:2001}. In such a high-dimensional setting, one faces new probabilistic and statistical challenges; see \cite{johnstone:titterington:michael:2009} for a review. 
The sample (auto)covariance matrices will typically be misleading \cite{bai:silverstein:2010,elkaroui:2009}. Even in the null case, i.e.,~when the components of the time series are iid, it is well-known that the sample covariance matrix poorly estimates the population covariance matrix. The fluctuations of the off-diagonal entries of the sample covariance matrix aggregate, creating an estimation bias which is quantified by the famous \MP~theorem  \cite{marchenko:pastur:1967}. This paper provides insight into the joint behavior of the off-diagonal entries with a particular focus on their extremes.

Aside from the high dimension, the marginal distributions of the components present another major challenge for an accurate assessment of the dependence. In the literature, one typically assumes a finite fourth moment since otherwise the largest eigenvalue of the sample covariance matrix would tend to infinity when $n$ and $p$ increase. 
This moment assumption, however, excludes heavy-tailed time series from the analysis. The theory for the eigenvalues and eigenvectors of the sample autocovariance matrices stemming from such time series is quite different from the classical \MP~theory which applies in the light-tailed case.
For detailed discussions about classical random matrix theory, we refer to the monographs \cite{bai:silverstein:2010,yao:zheng:bai:2015}, 
while the developments in the heavy-tailed case can be found in \cite{auffinger:arous:peche:2009,davis:mikosch:pfaffel:2016,davis:heiny:mikosch:xie:2016,heiny:mikosch:2017:iid,soshnikov:2004,soshnikov:2006} and the references therein. 
 For applications of extreme value statistics in finance and physics we refer to \cite{bun2017cleaning,fyodorov2008freezing}.

In this paper, we study  point process convergence for sequences of iid random walks.
We then apply our results to derive the joint asymptotic behavior of the off-diagonal entries of sample covariance and correlation matrices. 
Based on this joint convergence we propose new independence tests in high dimensions.

\subsection{The model}\label{sec:model}

We are given $p$-dimensional random vectors  $\x_t=(X_{1t}, \ldots, X_{pt})^\top$, $ t=1,\ldots,n$, whose components $(X_{it})_{i,t\ge 1}$ satisfy the following {\em standard conditions}:
\begin{itemize}
\item $(X_{it})$ are independent and identically distributed (iid) random variables with generic element $X$. 
\item $\E[X]=0$ and $\E[X^2]=1$. 
\end{itemize}
The dimension $p=p_n$ is some integer sequence tending to infinity as $\nto$.

We are interested in the (non-normalized) $p\times p$ sample covariance matrix 
$\bfS$ and the sample correlation matrix $\bfR$,
\begin{equation}\label{eq:dcov}
	\bfS= \bfS_n= \sum_{t=1}^n \x_t \x_t^\top \quad  \text{ and } \quad \bfR = \bfR_n= (\diag(\bfS))^{-1/2} \bfS (\diag(\bfS))^{-1/2}
\end{equation}
with entries
\beam\label{eq:dcorr}
S_{ij}= \sum_{t=1}^n X_{it}X_{jt} \quad \text { and } \quad R_{ij} = \frac{S_{ij}}{\sqrt{S_{ii} S_{jj}}} \,, \quad i,j=1,\ldots,p\,,
\eeam
respectively.
The dependence on $n$ is often suppressed in our notation.
 
Our goal is to prove limit theory for the \pp es of scaled and centered
points $(S_{ij})$, $(R_{ij})$. Asymptotic theory for the extremes of these points can be deduced from the limit point process.

\subsection{State-of-the-art}
In the literature, the largest off-diagonal entry of a sample covariance or correlation matrix has been studied, but results concerning the joint behavior of the entries are currently lacking. The theoretical developments are mainly due to Jiang. In \cite{jiang:2004b}, he analyzed the asymptotic distributions of 
\begin{equation*}
W_n:= \frac{1}{n} \max_{1\le i<j \le p} |S_{ij}| \quad \text{ and } \quad L_n:= \max_{1\le i<j \le p} |R_{ij}|
\end{equation*}
under the assumption $p/n\to \gamma \in (0,\infty)$. If $\E[|X|^{30+\delta}]<\infty$ for some $\delta>0$, he proved that 
\begin{equation}\label{eq:jiang2}
\limn \P( n W_n^2-4 \log p + \log \log p \le x) = \exp\Big(-\tfrac{1}{\sqrt{8 \pi}} \e^{-x/2} \Big)\,, \quad x\in \R\,,
\end{equation}
\begin{equation}\label{eq:jiang1}
\limn \P( n L_n^2-4 \log p + \log \log p \le x) = \exp\Big(-\tfrac{1}{\sqrt{8 \pi}} \e^{-x/2} \Big)\,, \quad x\in \R\,.
\end{equation}
The limiting law is a non-standard Gumbel distribution. Under the same assumptions Jiang \cite{jiang:2004b} also derived the limits
\begin{equation}\label{eq:jiang3}
\limn \sqrt{\frac{n}{\log p}} L_n =2 = \limn \sqrt{\frac{n}{\log p}} W_n \qquad \as
\end{equation}
Several authors managed to relax Jiang's moment condition while keeping the proportionality of $p$ and $n$. Zhou \cite{zhou:2007} showed that \eqref{eq:jiang1} holds if $n^6 \P(|X_{11}X_{12}|>n)\to 0$ as $\nto$. A sufficient condition is $\E[X^6]<\infty$. The papers \cite{li:liu:rosalsky:2010,li:qi:rosalsky:2012,li:rosalsky:2006} provide refinements of Zhou's condition. We summarize the distributional assumptions on $X$ for the validity of  \eqref{eq:jiang1} and \eqref{eq:jiang2} under proportionality of dimension $p$ and sample size $n$ as follows: $\E[X^6]<\infty$ is sufficient, and $\E[|X|^{6-\delta}]<\infty$ for any $\delta>0$ is necessary. In that sense, finiteness of the sixth moment is the optimal moment assumption. 

Interestingly, the optimal moment requirement also depends on the growth of $p$ if $p$ increases at a different rate than $n$. 
For the largest off-diagonal entry of the sample correlation matrix, also known as coherence of the random matrix $\bfX=(\x_1, \ldots,\x_n)$, the interplay between dimension and moments was addressed in \cite{liu:lin:shao:2008}. If $\E[|X|^s]<\infty$ for $s>2$ and
\begin{equation*}
c_1 n^{(s-2)/4} \le p\le c_2 n^{(s-2)/4}
\end{equation*}
with positive constants $c_1,c_2$, Theorem 1.1 in \cite{liu:lin:shao:2008} shows that \eqref{eq:jiang1} still applies. Note that, for proportional $p$ and $n$, this result requires the finiteness of the sixth moment. The larger $p$ relative to $n$, the more moments of $X$ are needed. If the moment generating function of $|X|$ exists in some neighborhood of zero, \eqref{eq:jiang1} holds for $p=O\big(\exp(n^\beta)\big)$ for certain $\beta\in (0,1/3)$; see \cite{cai:jiang:2011}.
Finally, if $(\log p)/n \not\to 0$, various phase transitions appear in the limit distribution of $L_n$. These were explored in \cite{cai:jiang:2012} under convenient assumptions on $X$ which yield an explicit formula for the density of $R_{12}$. 

\subsection{Objective and structure of this paper} 
Our main objective is to prove limit theory for the \pp es of scaled and centered points $(S_{ij})$, $(R_{ij})$ in a more general framework than used for the results above. 
By a continuous mapping argument, the joint asymptotic distribution of functionals of a fixed number of points can easily be deduced from the limit process.  In particular, we obtain the asymptotic distribution of the largest and smallest entries. 

First, we establish our result for $\bfS$. Since each $S_{ij}$ is a sum of iid random variables, we prove a useful large deviation theorem which exploits 
the asymptotic normal distribution of $S_{ij}$. 
Aside from finding suitable assumptions on $X$, the main challenge is that the $S_{ij}$ are not independent.  It turns out that despite their non-trivial dependence, the maximum behaves like the maximum of iid copies.
Therefore we will first solve the problem for iid random walk points $(S_n^{(i)})$ instead of $(S_{ij})$. This is done in Section~\ref{sec:largedev}.
\par
We continue in Section~\ref{sec:main} with the main results of the paper. Here
we derive \asy\ theory for the \pp es
\beao
\wt N_n= \sum_{1\le i<j\le p} \vep_{\wt d_p (S_{ij}/\sqrt{n}-\wt d_p)}\std N\,,
\eeao
for suitable constants $(\wt d_p)$ and limit Poisson random \ms\ $N$ 
with mean \ms\ $\mu$ on $\bbr$ \st\ $\mu(x,\infty)=\ex^{-x}$, $x\in\bbr$. 
Throughout this paper $\vep_x$ denotes the Dirac measure at $x$.
A continuous mapping theorem implies \ds al \con\ of finitely many
$S_{ij}$. In particular, the maximum entry $S_{ij}$ converges to the
Gumbel \ds\ provided $X$ has suitably many finite moments.
A related result holds
with $S_{ij}$ replaced by the corresponding sample correlations $R_{ij}= S_{ij}/\sqrt{S_{ii}S_{jj}}$.
In Section \ref{sec:4}, we extend our results to hypercubic random matrices of the form $\sum_{t=1}^n \x_t \otimes \cdots \otimes \x_t$ and we briefly discuss some statistical applications.
The proofs of the main results are presented in Sections \ref{sec:proof1} and \ref{sec:proofcov11}.

\section{Normal approximation to large deviation probabilities}\label{sec:largedev}\setcounter{equation}{0}

In this section we collect some precise \ld\ results for sums of 
independent \rv s. 
Throughout this section, $(X_i)$ is an iid \seq\ of mean zero, unit variance \rv s with generic element $X$, \ds\ $F$ and right 
tail $\ov F=1-F$. We define the corresponding partial sum process 
\beao
S_0=0\,,\qquad S_n=X_1+\cdots+X_n\,,\qquad n\ge 1\,.
\eeao
Consider iid copies $(S_n^{(i)})_{i\ge 1}$ of $S_n$.
We also introduce an integer \seq\ $(p_n)$ \st\ $p=p_n\to\infty$ as $\nto$.
We are interested in the limit behavior of the $k$ largest values
among $(S_n^{(i)})_{i=1,\ldots,p}$, in particular in the possible limit laws of the 
maximum $\max_{i=1,\ldots,p} S_n^{(i)}$.
More generally, we are interested
in the limit behavior of the \pp es $N_n$,
\beam\label{eq:conv}
N_n=\sum_{i=1}^p \vep_{d_p (S_n^{(i)}/\sqrt{n}-d_p)} \std N\,,\qquad \nto\,,
\eeam
toward  a
Poisson random \ms\ $N$ on $\bbr$ with mean \ms\ $\mu$
given by $\mu(x, \infty)=\ex^{-x}$, $x\in\bbr$. The \seq\ 
$(d_p)$ is chosen \st\  $p\,\ov \Phi(d_p)= p(1-\Phi(d_p))\to 1$ as $p\to\infty$
where $\Phi$ is the standard normal distribution function. In this paper, we work with
\beam \label{eq:d_n}
d_p=\sqrt{2\log p} - \dfrac{\log\log p+\log 4\pi}{2(2\log p)^{1/2}}\,.
\eeam
A motivation for this choice is that for an iid \seq\ $(X_i)$ with \df\ $\Phi$
we have 
\beao
\lim_{\nto} \P\Big(d_p \big(\max_{i=1,\ldots,p} X_i-d_p\big)\le x\Big)=\exp(-\ex^{-x})=\Lambda(x)\,,
\qquad x\in\bbr\,. 
\eeao
The limit \df\ is the standard Gumbel $\Lambda$; see Embrechts et al. \cite[Example~3.3.29]{embrechts:kluppelberg:mikosch:1997}.
\par 
By \cite[Theorem~5.3]{resnick:2007}, relation \eqref{eq:conv} is equivalent to the following limit relation
for the tails
\beam\label{eq:esesa1}
p\,\P\big(d_p\,(S_n/\sqrt{n}-d_p)>x\big) \to \ex^{-x}\,, \qquad\nto\,,\qquad x\in\bbr\,,
\eeam
and also to \con\ of the maximum of the random walks $(S_n^{(i)})_{i=1,\ldots,p}$ to the Gumbel 
\ds :
\beam\label{eq:esesa2}
\lim_{\nto}\P\Big(\max_{i=1,\ldots,p} d_p(S_n^{(i)}/\sqrt{n}-d_p)\le x\Big)=\Lambda(x)\,,\qquad x\in\bbr\,.
\eeam
Equations \eqref{eq:esesa1} and \eqref{eq:esesa2}  involve precise \ld\ \pro ies for the random walk $(S_n)$.
To state some results which are relevant 
in this context, 
we assume one of the following three moment conditions:
\begin{enumerate}
\item[\rm (C1)]
There exists $s>2$ \st\ $\E[|X|^s]< \infty$.
\item[\rm (C2)]
There exists an increasing differentiable \fct\ $g$ on $(0,\infty)$ 
\st\ $\E[\exp(g(|X|))]<\infty$,
$g'(x)\le \tau g(x)/x$ for sufficiently large $x$ and some $\tau<1$, and 
$\lim_{\xto}g(x)/\log x=\infty$.
\item[\rm (C3)] 
There exists a constant $h>0$ \st\ $\E[\exp(h\,|X|)]<\infty$.
\end{enumerate}
Note that the conditions \rm (C1)--\rm (C3) are increasing in strength. One has the implications \rm (C3) $\Rightarrow$ \rm (C2) $\Rightarrow$ \rm (C1).
The following result explains the connection between the rate of 
$p_n\to\infty$ in  \eqref{eq:esesa1} and the conditions (C1)--(C3) on the distribution of $X$.
\bth \label{thm:d_p}
Assume the standard conditions on $(X_i)$ and that 
$p=p_n\to\infty$ satisfies
\begin{itemize}
\item
$p=O(n^{(s-2)/2})$ if {\rm (C1)} holds.
\item
$p= \exp(o(g_n^2\wedge n^{1/3}))$ where
$g_n$ is the solution of the equation $g_n^2=g(g_n \sqrt{n})$,
if {\rm (C2)} holds.
\item
$p=\exp(o(n^{1/3}))$ if {\rm (C3)} holds.
\end{itemize}
Then we have 
\beqq\label{eq:lll}
p\,\P(S_n/\sqrt{n}> d_p+x/d_p) \sim p\, \ov \Phi (d_p+x/d_p)\to \ex^{-x},\quad \nto, \quad x\in\bbr\,.
\eeqq
\ethe
\bre\label{rem:not}
The proofs of these results follow from the definition of $d_p$ and 
precise \ld\ bounds of the type
\beam\label{eq:ss}
\sup_{0\le y\le \gamma_n}
\Big|\dfrac{\P(S_n/\sqrt{n}>y)}{\ov \Phi(y)}-1\Big|\to 0\,,\quad \nto\,,
\eeam
where $\gamma_n\to\infty$ are \seq s depending on the conditions (C1)--(C3).
If (C3) holds, one can choose $\gamma_n=o(n^{1/6})$ implying the 
growth rate $p=\exp(o(n^{1/3}))$.
 This follows from
Petrov's \ld\ result \cite[Theorem VIII.2]{petrov:1972}. Under (C2)
one can choose $\gamma_n=o(n^{1/6}\wedge g_n)$ implying the growth rate
$p= \exp(o(g_n^2\wedge n^{1/3}))$.
This follows from S.V. Nagaev's
\cite[Theorem~3]{nagaev:1969}. Under (C1) he also derived $\gamma_n=\sqrt{(s/2-1)\log n}$ in \cite[Theorem~4]{nagaev:1969}. The best possible range 
under (C1) is $\gamma_n=\sqrt{(s-2)\log n}$; see Michel \cite[Theorem~4]{michel:1974}.
\par

The aforementioned \ld\ results cannot be improved in general
unless additional conditions are assumed. For example, under (C3) if the 
cumulants of $X$ of order  $k=3,\ldots,r+2$ vanish then
\eqref{eq:lll} holds  for $p=\exp(o(n^{(r+1)/(r+3)}))$.
This follows from the fact that one can choose 
$\gamma_n=o(n^{(r+1)/(2(r+3))})$; see  \cite[Theorem VIII.7]{petrov:1972}.
In Section VIII.3 of \cite{petrov:1972}  one also finds necessary and sufficient 
conditions for \eqref{eq:ss} to hold in certain intervals. As a matter of
fact, such conditions are not of moment-type. Therefore one cannot
expect that necessary and sufficient conditions for \eqref{eq:lll} for 
general \seq s $(p_n)$ can be expressed in terms of moments.
There is however a clear relationship between possible rates of $(p_n)$ 
and the existence of moments: the higher moments 
exist the larger we can choose $(p_n)$, but the growth cannot be arbitrarily fast. 
\par
In passing we mention a sharp \ld\ result for a \seq\ of iid \regvary\ \rv s $(X_i)$ with tail index $\alpha>2$, i.e., a generic element $X$ has tails
\beam\label{eq:regvarcond}
\P(\pm X>x)\sim p_{\pm}\,\dfrac{L(x)}{x^\alpha}\,,\qquad \xto\,,
\eeam
where $p_++p_-=1$ and $L$ is \slvary . 
Then, due to S.V. Nagaev's results in \cite{nagaev:1979}, one has \eqref{eq:ss} with $\gamma_n=\sqrt{c_1\log n}$ for $c_1<\alpha-2$, while for $\xi_n= \sqrt{c_2 \log n}$ and any $c_2>\alpha-2$, 
\beam\label{eq:nagresult}
\sup_{y>\xi_n} \Big|\dfrac{\P(\pm S_n/\sqrt{n}>y)}{n\,\ov F(y\,\sqrt{n})}- p_{\pm}\Big|\to 0\,,\qquad \nto\,.
\eeam
There exists a small but increasing literature on precise \ld\ results;
we refer to \cite{denisov:dieker:shneer:2008,rozovski:1993}
and the references therein.
\ere
Now consider iid copies $(S_n^{(i)})_{i\ge 1}$ of $(S_n)$.
The following result is an immediate con\seq\ of Theorem~\ref{thm:d_p}.
\bth\label{lem:einmahl}
Assume the conditions of Theorem~\ref{thm:d_p}. 
Relation \eqref{eq:lll} is equivalent 
to either of the following two limit relations:
\beqq\label{eq:4r}
\P\Big(d_p\max_{i=1,\ldots,p} (S_n^{(i)}/\sqrt{n}-d_p)\le x\Big)\to \Lambda(x)\,,\qquad x\in\bbr\,,\qquad \nto\,. 
\eeqq
and 
\beam\label{eq:resnick}
N_n=\sum_{i=1}^p\vep_{d_p\,(S_n^{(i)}/\sqrt{n}-d_p)}\std N=\sum_{i=1}^{\infty} \vep_{-\log \Gamma_i}\,,\qquad \nto\,.
\eeam
where $\Gamma_i= E_1+\cdots+E_i$, $i\ge 1$, and $(E_i)$ is iid standard exponential, i.e., $N$ is a Poisson random \ms\ with mean \ms\ $\mu(x,\infty)=\ex^{-x}$,
$x\in\bbr$.
\ethe
\begin{proof}
Following Resnick \cite[Theorem~5.3]{resnick:2007}, \eqref{eq:resnick}
and \eqref{eq:lll} are equivalent. Moreover, a continuous mapping argument
implies that, if $N_n\std N$, then
\begin{equation}\label{eq:trick}
\begin{split}
\P(N_n(x,\infty)=0)&= \P\Big(d_p\max_{i=1,\ldots,p} (S_n^{(i)}/\sqrt{n}-d_p)\le x\Big)\\
&\to \P(N(x,\infty)=0)= \P(-\log \Gamma_1\le x)= \exp( - \ex^{-x})\,.
\end{split}
\end{equation}
Moreover, if \eqref{eq:4r} holds a Taylor expansion argument shows that
\beao
\begin{split}
	\P\Big(d_p\max_{i=1,\ldots,p} (S_n^{(i)}/\sqrt{n}-d_p)\le x\Big) & = 
\Big(1- \dfrac{p\, \P(S_n/\sqrt{n}> d_p+x/d_p)}{p}\Big)^p \\ & \to \exp(-\ex^{-x}) \,, \qquad \nto \,,
\end{split}
\eeao
holds \fif\ \eqref{eq:lll} does.
\end{proof}

This means that in case of iid points $(S_n^{(i)})$ the convergence of the maximum is equivalent to the convergence of the point processes $(N_n)$. In general, the latter is a stronger statement. If $(N_n)$ converges, the distribution of the maximum can always be recovered using \eqref{eq:trick}.

\section{Main results}\label{sec:main}\setcounter{equation}{0}

\subsection{Point process convergence of a sample covariance matrix}\label{sec:gumbel}
We consider the
sample covariance matrix $\bfS=(S_{ij})_{i,j=1,\ldots,p}$ introduced in Section \ref{sec:model}.
The problem of showing limit theory for the associated point process is similar to Theorem~\ref{lem:einmahl} for iid random
walks $(S_n^{(i)})$. In contrast to the iid 
copies $(S_n^{(i)})$ in Section \ref{sec:largedev} here we deal
with $p(p-1)/2$ {\em dependent} off-diagonal entries of $\bfS$. Nevertheless, Theorem~\ref{thm:d_p} will again be a main tool
for proving these results. 
\par
Since the summands of $S_{ij}$ are iid products
$X_{it}X_{jt}$ we need to adjust the conditions (C2) and (C3) to this situation
while (C1) remains unchanged. 
\begin{enumerate}
\item[\rm (C2')]
There exists an increasing differentiable \fct\ $g$ on $(0,\infty)$ 
\st\ \linebreak $\E[\exp(g(|X_{11}X_{12}|))]<\infty$,
$g'(x)\le \tau g(x)/x$ for sufficiently large $x$ and some $\tau<1$, and 
$\lim_{\xto}g(x)/\log x=\infty$.
\item[\rm (C3')] 
There exists a constant $h>0$ \st\ $\E[\exp(h\,|X_{11}X_{12}|)]<\infty$.
\end{enumerate}

\begin{remark}{\em
By Lemma \ref{lem:mgf}, (C3') implies (C3). The reverse implication is not true. For example, if $X$ is standard exponential, which satisfies (C3), then $X_{11}X_{21}$ has Weibull-type tail with parameter $1/2$; see \cite{arendarczyk:debicki:2011}; which does not satisfy (C3'). By Lemma \ref{lem:mgf}, (C2') implies
$\E[\exp(g(|X|)]<\infty$. 
}\end{remark}

\bth\label{thm:cov}
Assume the standard conditions on $(X_{it})$ and that 
$p=p_n\to\infty$ satisfies: 
\begin{itemize}
\item
$p=O(n^{(s-2)/4})$ if {\rm (C1)} holds.
\item
$p= \exp(o(g_n^2\wedge n^{1/3}))$, where
$g_n$ is the solution of the equation $g_n^2=g(g_n \sqrt{n})$,
if {\rm (C2')} holds.
\item
$p=\exp(o(n^{1/3}))$ if {\rm (C3')} holds.
\end{itemize}
Define $\wt d_p =d_{p(p-1)/2}$. Then the following \pp\ \con\ holds:
\beao
N_n^S := \sum_{1\le i<j\le p} \vep_{\wt d_p(S_{ij}/\sqrt{n}-\wt d_p)}
\std N\,,
\eeao
where $N$ is the Poisson random measure defined in \eqref{eq:resnick}.
\ethe
The proof is given in Section~\ref{sec:proofcov}.
\subsubsection*{Some comments}
\begin{itemize}
\item The point process convergence in Theorem~\ref{thm:cov} remains valid if the standard conditions on $(X_{it})$ are relaxed to the following two conditions:
\begin{itemize}
\item The columns $\x_1,\ldots, \x_n$ of the matrix $(X_{it})_{i=1,\ldots,p; t=1 \ldots,n}$ are iid.
\item The random variables $X_{11},\ldots, X_{p1}$ are independent, with mean zero and unit variance, but they are not necessarily identically distributed.
\end{itemize}
The proof is the same as that of Theorem~\ref{thm:cov}. All results in Section~\ref{sec:main} hold under these relaxed conditions. For clarity of presentation and proof, all statements are presented under the standard conditions. 
\item
Theorem~\ref{thm:cov} can be extended by introducing additional
time stamps:
\beao
\sum_{1\le i<j\le p} \vep_{\big(\frac{(i,j)}{ p},\wt d_p(S_{ij}/\sqrt{n}-\wt d_p)\big)}
\std \wt N\,,\qquad \nto\,,
\eeao
on $\{(x_1,x_2): 0\le x_1\le 1\,,x_1\le x_2\}\times \bbr$
where $\wt N$ is a Poisson random \ms\ with mean \ms\ $\Leb\times \mu$.
This follows for example by using the techniques of \cite[Proposition~3.21]{resnick:1987}.
\item
Under any of the moment conditions {\rm (C2'),(C3')} one can choose 
$p\sim \gamma n$ for $\gamma>0$ in Theorem~\ref{thm:cov}. Under (C1), one needs the condition $\E[|X|^6]<\infty$ in order to guarantee $p=O(n)$. 
This is in agreement with the minimal moment requirement for the results on $W_n$ (see \eqref{eq:jiang2}).  
\end{itemize}

Next we consider the order statistics of $S_{ij}$, $1\le i<j\le p$:
\beao
\min_{1\le i<j\le p} S_{ij}=:S_{(p(p-1)/2)}\le \cdots \le S_{(1)}:=\max_{1\le i<j\le p}S_{ij}\,.
\eeao
Theorem~\ref{thm:cov} implies the convergence of the largest and smallest off-diagonal entries of $\bfS$. 
 \bco\label{cor:joint}
Under the conditions of Theorem~\ref{thm:cov} we have joint \con\
of the upper and lower order statistics: for any $k\ge 1$,
\beam
\wt d_p\big (S_{(i)}/\sqrt{n}-\wt d_p\big)_{i=1,\ldots,k}&\std& 
(-\log \Gamma_i)_{i=1,\ldots,k}\,,\label{eq:k1}\\
\wt d_p\big (S_{(i)}/\sqrt{n}+\wt d_p\big)_{i=p(p-1)/2,\ldots,p(p-1)/2-k+1}&\std& 
(\log \Gamma_i)_{i=1,\ldots,k}\,.\label{eq:k2}
\eeam
Moreover, the properly normalized maxima and minima are asymptotically independent, that is for any $x,y\in\bbr$ we have as $\nto$,
\beqq\label{eq:norming}
\P\Big(\wt d_p (S_{(1)}/\sqrt{n}-\wt d_p)\le x\,,\wt d_p
(S_{(p(p-1)/2)}/\sqrt{n}+\wt d_p)\le y\Big)
\to \Lambda(x) (1-\Lambda(-y) )\,.
\eeqq
\eco
\begin{proof} Relation \eqref{eq:k1} is immediate from $N_n^S\std N$
and the \cmt . The same argument works for \eqref{eq:k2} if one observes
that
\beao
\wt d_p\big (S_{(i)}/\sqrt{n}+\wt d_p\big)_{i=p(p-1)/2,\ldots,p(p-1)/2-k+1}=
- \wt d_p\big ((-S)_{(i)}/\sqrt{n}-\wt d_p\big)_{i=1,\ldots,k}
\eeao
where $(-S)_{(i)}$ is the ordered sample of $(-S_{ij})$. An application of  
\eqref{eq:k1} with $(S_{ij})$ replaced by $(-S_{ij})$ then yields \eqref{eq:k2}.
\par
Now we consider joint \con\ of the maximum and the minimum: for $x,y\in \R$,
\beao 
	\lefteqn{ G_n(x,y)} \\
	&= &\P\Big(\wt d_p (S_{(1)}/\sqrt{n}-\wt d_p)\le x\,,\wt d_p
(S_{(p(p-1)/2)}/\sqrt{n}+\wt d_p)> y\Big)\\
&= &  \P\Big(-\wt d_p+y/\wt d_p< S_{ij}/\sqrt{n}\le \wt d_p+x/\wt d_p\,\mbox{ for all $1\le i<j\le p$} \Big)\\
&= & 1- \P\Big(\bigcup_{1\le i<j\le p}\{S_{ij}/\sqrt{n}> \wt d_p+x/\wt d_p\}\cup\{ -S_{ij}/\sqrt{n}\ge \wt d_p-y/\wt d_p\}  \Big)
\,.
\eeao
Writing 
\beao
A_{ij}= \{S_{ij}/\sqrt{n}> \wt d_p+x/ \wt d_p\}\cup\{ -S_{ij}/\sqrt{n}\ge \wt d_p-y/\wt d_p\} \,,
\eeao
one can use the same arguments used for establishing $\P(N_n^S(B)=0) \to
\P(N(B)=0)$ in the proof of Theorem~\ref{thm:cov} to show that
\beao
G_n(x,y)\to \exp\big(-(\ex^{y}+\ex^{-x})\big)=\Lambda(x)\Lambda(-y)\,,\qquad \nto\,.
\eeao
Hence
\beao\lefteqn{
\P\Big(\wt d_p (S_{(1)}/\sqrt{n}-\wt d_p)\le x\,,\wt d_p
(S_{(p(p-1)/2)}/\sqrt{n}+\wt d_p)\le y\Big)}\\
&= &\P\Big(\wt d_p (S_{(1)}/\sqrt{n}-\wt d_p)\le x\Big)
- G_n(x,y)\\
&\to & \Lambda(x)- \Lambda(x)\Lambda(-y)
=\Lambda(x) (1-\Lambda(-y))\,,\qquad \nto\,.
\eeao
\end{proof}
\bre\label{cor:cov1}
An immediate con\seq\ is
\begin{equation*}
 \frac{S_{(1)}}{\sqrt{n \log p}} \cip 2 \quad \text{ and } \quad \frac{S_{(p(p-1)/2)}}{\sqrt{n \log p}} \cip -{2}\,.
\end{equation*}
\ere
\bre
If $\E[|X|^{s}]<\infty$ for some $s>4$ and $\var(X^2)>0$, we conclude from Theorem~\ref{lem:einmahl} that for $p=O(n^{(s-4)/4})$,
\beam\label{eq:rera}
\sum_{i=1}^p\vep_{d_p\big((S_{ii}-n)/\sqrt{n \var(X^2)}-d_p\big)}\cid N\,.
\eeam
In particular, $\big(\max_{i=1,\ldots,p} d_p\big((S_{ii}-n)/\sqrt{n \var(X^2)}-d_p\big)\big)$
converges to a Gumbel \ds . We notice that $d_p\sim \sqrt{2\log p}$
while the normalizing and centering 
constants for $(S_{ij}/\sqrt{n})_{i\ne j}$, 
in \eqref{eq:norming} are $\wt  d_p\sim 2 \sqrt{\log p}$.
\par
Moreover, while we still have Gumbel \con\ for the maxima of the off-diagonal
elements $S_{ij}$ for suitable $(p_n)$ if  $\E[|X|^{s}]<\infty$
for some $s\in (2,4)$, the \pp\ \con\ 
in \eqref{eq:rera} cannot hold. Indeed, then an appeal to Nagaev's
\ld\ result \eqref{eq:nagresult} shows that, under the \regvar\ 
condition \eqref{eq:regvarcond} on $X$ with $\alpha\in (2,4)$,
\beao
\sum_{i=1}^p\vep_{a_{np}^{-2} (S_{ii}-n) } \cid N\,,
\eeao
where $N$ is Poisson random \ms\ on the state space $(0,\infty)$ 
with mean \ms\ $\mu_\alpha(x,\infty)=x^{-\alpha/2}$, $x>0$,
and $a_k$ satisfies $k\P(|X|>a_{k})\to 1$
as $\kto$.
In particular, the maxima of $(S_{ii})$ converge toward a standard 
 Fr\'echet \ds :
\beao
\P\Big(a_{np}^{-2}\max_{i=1,\ldots,p}(S_{ii}-n)\le x\Big)\to \Phi_{\alpha/2}(x)=\exp(-x^{- \alpha/2})\,,\qquad x>0\,.
\eeao
Assume \eqref{eq:regvarcond} on $X$ with $\alpha\in (2,4)$. If we construct a point process by choosing the normalization $a_{np}^2$ for the diagonal and off-diagonal entries, the contribution of the $(S_{ij})$ vanishes in the limit:
\beao
\sum_{i=1}^p\vep_{a_{np}^{-2}(S_{ii}-n)}+\sum_{1\le i<j\le p}\vep_{a_{np}^{-2} S_{ij}}\cid N\,.
\eeao
It is also proved in Heiny and Mikosch \cite{heiny:mikosch:2017:iid}
that the diagonal entries  $(S_{ii})$ of the sample covariance matrix
dominate the off-diagonal terms in operator norm, that is $\norm{\bfS-\diag(\bfS)}/\norm{\diag(\bfS)}\cip 0$ as $\nto$.
In turn, the \asy\ behavior of the largest eigenvalues of the sample
covariance matrix are determined by the corresponding largest values of
$(S_{ii})$. 

\ere

The techniques in this paper straightforwardly extend to other transformations of the points $(S_{ij})$. As an example, we provide one such result for the squares $(S_{ij}^2)$.
\bco
Assume the conditions of Theorem~\ref{thm:cov}. Then
\beao
N_n^{S^2} = \sum_{1\le i<j\le p}\vep_{0.5 S_{ij}^2/n- 0.5 \,\wt d_p^2-\log 2}
\eeao
converges to the Poisson random \ms\ $N$ described in Theorem~\ref{thm:cov}.
\eco
\begin{proof} One can follow the arguments in the proof of
 Theorem~\ref{thm:cov}. In order to show condition (i), observe that for $x\in \R$,
\begin{equation*}
\begin{split}
\E\big[N_n^{S^2}(x,\infty)\big] &=\tfrac{p(p-1)}{2}\,\P\Big( \frac{S_{12}^2}{2n}- \frac{\wt d_p^2}{2}-\log 2>x \Big)\\
&=\tfrac{p(p-1)}{2}\,\P\Big(\Big|\frac{S_{12}}{\sqrt{n}}\Big|>\sqrt{2 (x+\log 2) +\wt d_p^2}\Big)\\
& \sim  p^2\,\ov \Phi \Big( \sqrt{2 (x+\log 2) +\wt d_p^2} \Big) 
\to \ex^{-x}\,, \qquad \nto\,.
\end{split}
\end{equation*}
\end{proof}

\subsection{Point process convergence of a sample correlation matrix}\label{sec:corr}
Based on Theorem~\ref{thm:cov} we can also derive 
point process convergence for the sample correlation matrix 
$\bfR=(R_{ij})_{i,j=1,\ldots,p}$ defined in
\eqref{eq:dcov} and  \eqref{eq:dcorr}. 

\bth\label{thm:corr}
Assume the standard conditions on $(X_{it})$ and that 
$p=p_n\to\infty$ satisfies: 
\begin{itemize}
\item
$p=O(n^{(s-2)/ 4})$ if {\rm (C1)} holds.
\item
$p= \exp(o(g_n^2 \wedge n^{1/3}))$ where
$g_n$ is the solution of the equation $g_n^2=g(g_n \sqrt{n})$ if {\rm (C2')} holds.
\item
$p=\exp(o(n^{1/3}))$, if  \rm (C3') holds. 
\end{itemize}
Then the following \pp\ \con\ holds,
\beao
N_n^R := \sum_{1\le i<j\le p} \vep_{\wt d_p(\sqrt{n} R_{ij}-\wt d_p)}
\std N\,,
\eeao
where $N$ is the Poisson random \ms\ defined in \eqref{eq:resnick}.
\ethe
The proof is given in Section~\ref{sec:proofcorr}.
\par
The results for the order statistics of $R_{ij}$, $1\le i<j\le p$:
\beao
\min_{1\le i<j\le p} R_{ij}=:R_{(p(p-1)/2)}\le \cdots \le R_{(1)}:=\max_{1\le i<j\le p} R_{ij}\,,
\eeao
carry over from those for the order statistics of $(S_{ij})$.
\bco\label{cor:corr}
Under the conditions of Theorem~\ref{thm:corr} we have joint \con\
of the upper and lower order statistics: for any $k\ge 1$,
\beao
\wt d_p\big (\sqrt{n} R_{(i)}-\wt d_p\big)_{i=1,\ldots,k}&\std& 
(-\log \Gamma_i)_{i=1,\ldots,k}\,,\\
\wt d_p\big (\sqrt{n} R_{(i)}+\wt d_p\big)_{i=p(p-1)/2,\ldots,p(p-1)/2-k+1}&\std& 
(\log \Gamma_i)_{i=1,\ldots,k}\,.
\eeao
Moreover, for any $x,y\in\bbr$,
\begin{equation*}
\lim_{\nto} \P\Big(\wt d_p (\sqrt{n} R_{(1)}-\wt d_p)\le x\,,\wt d_p
(\sqrt{n} R_{(p(p-1)/2)}+\wt d_p)\le y\Big)
= \Lambda(x) (1-\Lambda(-y))\,.
\end{equation*}
and 
\begin{equation*}
\sqrt{\frac{n}{\log p}} R_{(1)} \cip 2 \quad \text{ and } \quad  \sqrt{\frac{n}{\log p}} R_{(p(p-1)/2)} \cip -{2}\,.
\end{equation*}
\eco

\section{Extensions and applications}\setcounter{equation}{0}\label{sec:4}

\subsection{Extensions} \setcounter{equation}{0}

In this section, we extend our results for the point processes constructed from the off-diagonal entries of the sample covariance matrices 
$\bfS_n= \sum_{t=1}^n \x_t \x_t^\top$, where $\x_t$ are the $p$-dimensional columns of the data matrix $\X$. We introduce the hypercubic random matrices (or tensors) of order $m$:
\begin{equation}\label{eq:tensor}
\bfS^{(m)}= \bfS_n^{(m)}= \sum_{t=1}^n \underbrace{\x_t \otimes \cdots \otimes \x_t}_{m \text{ times}}  \,, \qquad m\in \N\,, \qquad n\ge 1\,.
\end{equation}
with entries
\begin{equation*}
S_{i_1, \ldots,i_m}^{(m)} = \Big( \sum_{t=1}^n X_{i_1 t} X_{i_2t} \cdots X_{i_mt} \Big)\,, \qquad {1\le i_1, \ldots, i_m \le p}\,.
\end{equation*}
It is easy to see that $\bfS^{(2)} = \bfS$ arises as a special case.

Next, we generalize the moment conditions (C2') and (C3') to the $m$-fold product $X_{11}\cdots X_{1m}$.
\begin{enumerate}
\item[\rm (C2(m))]
There exists an increasing differentiable \fct\ $g$ on $(0,\infty)$ 
\st\ \linebreak $\E[\exp(g(|X_{11}\cdots X_{1m}|))]<\infty$,
$g'(x)\le \tau g(x)/x$ for sufficiently large $x$ and some $\tau<1$, and 
$\lim_{\xto}g(x)/\log x=\infty$.
\item[\rm (C3(m))] 
There exists a constant $h>0$ \st\ $\E[\exp(h\,|X_{11}\cdots X_{1m}|)]<\infty$.
\end{enumerate}

The following result extends Theorem~\ref{thm:cov} to hypercubic matrices of order $m$. 
\bth\label{thm:covm}
Let $m\in \N$ and define $d_{p,m}=d_{\binom{p}{m}}$. Assume the standard conditions on $(X_{it})$ and that 
$p=p_n\to\infty$ satisfies: 
\begin{itemize}
\item
$p=O(n^{(s-2)/4})$ if {\rm (C1)} holds.
\item
$p= \exp(o(g_n^2\wedge n^{1/3}))$, where
$g_n$ is the solution of the equation $g_n^2=g(g_n \sqrt{n})$,
if {\rm (C2(m))} holds.
\item
$p=\exp(o(n^{1/3}))$ if {\rm (C3(m))} holds.
\end{itemize}
 Then the following \pp\ \con\ holds:
\beao
N_n^{(m)}= \sum_{1\le i_1< \cdots < i_m \le p} \vep_{d_{p,m}(S_{i_1, \ldots,i_m}^{(m)}/\sqrt{n}-d_{p,m})}
\std N\,,
\eeao
where $N$ is the Poisson random \ms\ defined in \eqref{eq:resnick}.
\ethe
The proof is given in Section \ref{sec:proofcovm}. 
Since $d_{p,m} \sim \sqrt{2m \,\log p}$, Theorem~\ref{thm:covm} implies the convergence of the largest and smallest off-diagonal entries of $\bfS^{(m)}$. 
\begin{corollary}{
Under the assumptions of Theorem~\ref{thm:covm}, we have, as $\nto$,
\begin{equation*}
\max_{1\le i_1< \cdots < i_m \le p} \frac{S_{i_1, \ldots,i_m}^{(m)}}{\sqrt{n \log p}} \cip \sqrt{2m} \quad \text{ and } \quad \min_{1\le i_1< \cdots < i_m \le p} \frac{S_{i_1, \ldots,i_m}^{(m)}}{\sqrt{n \log p}} \cip -\sqrt{2m}\,.
\end{equation*}
}\end{corollary}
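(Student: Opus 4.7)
The plan is to deduce both limits directly from the point process convergence $N_n^{(m)}\std N$ of Theorem~\ref{thm:covm}, in the same spirit as Remark~\ref{cor:cov1} for the case $m=2$.

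First, since the limiting Poisson random measure $N$ almost surely has only finitely many atoms in each half-line $(x,\infty)$, the map $\nu\mapsto\nu(x,\infty)$ is continuous at $N$ almost surely. Hence $N_n^{(m)}\std N$ gives, for every $x\in\R$,
\begin{equation*}
\P\Big(d_{p,m}\Big(\max_{1\le i_1<\cdots<i_m\le p} S^{(m)}_{i_1,\ldots,i_m}/\sqrt{n}-d_{p,m}\Big)\le x\Big)=\P(N_n^{(m)}(x,\infty)=0)\to\Lambda(x),
\end{equation*}
so the centered maximum $d_{p,m}(\max S^{(m)}/\sqrt{n}-d_{p,m})$ is stochastically bounded, i.e., $O_P(1)$.

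Second, since $\binom{p}{m}\sim p^m/m!$ one has $\log\binom{p}{m}=m\log p+O(1)$, and the explicit form \eqref{eq:d_n} of $d_k$ yields $d_{p,m}\sim\sqrt{2m\log p}$ and $d_{p,m}/\sqrt{\log p}\to\sqrt{2m}$. Dividing the previous display by $d_{p,m}\sqrt{\log p}$ I obtain
\begin{equation*}
\frac{\max_{1\le i_1<\cdots<i_m\le p} S^{(m)}_{i_1,\ldots,i_m}}{\sqrt{n\log p}}=\frac{d_{p,m}}{\sqrt{\log p}}+\frac{O_P(1)}{d_{p,m}\sqrt{\log p}}\stp\sqrt{2m},
\end{equation*}
which is the first claim.

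For the minimum I would apply Theorem~\ref{thm:covm} to the negated array $(-X_{i_1t}\cdots X_{i_mt})_t$: the corresponding sums are $-S^{(m)}_{i_1,\ldots,i_m}$, the summands are iid with mean zero and unit variance, and their absolute values agree with $|X_{i_1t}\cdots X_{i_mt}|$, so each of (C1), (C2(m)), (C3(m)) is preserved and the conclusion of Theorem~\ref{thm:covm} applies verbatim to $(-S^{(m)}_{i_1,\ldots,i_m})$. The first half of the argument then yields $-\min_{1\le i_1<\cdots<i_m\le p} S^{(m)}_{i_1,\ldots,i_m}/\sqrt{n\log p}\stp\sqrt{2m}$, which rearranges to the second limit. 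I do not anticipate any substantive obstacle: the corollary is essentially a packaging of Theorem~\ref{thm:covm} with the elementary asymptotics of $d_{p,m}$, and the only step worth double-checking is the continuity of the maximum functional in the vague topology on point processes at the limit $N$, which is standard, see for instance \cite[Proposition~3.13]{resnick:1987}.
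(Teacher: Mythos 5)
Your proposal is correct and matches the paper's intended argument: the corollary is stated there as an immediate consequence of Theorem~\ref{thm:covm} combined with $d_{p,m}\sim\sqrt{2m\log p}$ (exactly as in Remark~\ref{cor:cov1} for $m=2$), with the minimum handled, as in Corollary~\ref{cor:joint}, by running the same argument for the negated entries. The only cosmetic caveat is that for even $m$ the negated summands $-X_{i_1t}\cdots X_{i_mt}$ cannot be realized as products of entries of another iid array, so strictly speaking one should say that the \emph{proof} of Theorem~\ref{thm:covm} applies verbatim to $(-S^{(m)}_{i_1,\ldots,i_m})$ (all its ingredients are invariant under negation of the summands) rather than the theorem itself --- the same licence the paper takes when it replaces $(S_{ij})$ by $(-S_{ij})$ in Corollary~\ref{cor:joint}.
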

Analogously to Corollary~\ref{cor:joint}, Theorem~\ref{thm:covm} yields the joint weak convergence of the off-diagonal entries of $\bfS^{(m)}$, thus extending Theorems 1 and 2 in \cite{jiang:xie:2019} on the asymptotic Gumbel property of the largest off-diagonal entry of $\bfS^{(m)}$.

\subsection{An application to threshold based estimators}

A fundamental task in statistics is the estimation of the population covariance or correlation matrix of a multivariate distribution. If the dimension $p$ becomes large, the sample versions $n^{-1} \bfS$ and $\bfR$ cease to be suitable estimators. Even for our simple model in Section \ref{sec:model}, i.e., when the population covariance and correlation matrices are the $p$-dimensional identity matrix $\bfI_p$, the estimators $n^{-1} \bfS$ and $\bfR$ are not asymptotically consistent for $\bfI_p$. This phenomenon was explored in \cite{heiny:2019} among many other papers.
Assuming $\E[X^4]<\infty$ and $p/n\to \gamma \in [0,\infty)$, \cite{heiny:2019} shows that, as $\nto$,
\beao
\sqrt{n/p}\, \norm{n^{-1} \bfS-\bfI_p} \cip 2 + \sqrt{\gamma} \quad \text{ and } \quad  \sqrt{n/p}\, \norm{\bfR-\bfI_p} \cip 2 + \sqrt{\gamma}\,.
\eeao
Note that $p$ is allowed to grow at a slower rate than $n$. It was also observed in \cite{heiny:2019} that 
\begin{equation}\label{eq:sdsdsdqw}
\sqrt{n/p}\, \norm{{ n^{-1}}\diag(\bfS)- \bfI_p}\cip 0\,.
\end{equation}
We would like to construct estimators $\widehat \bfS$, $\widehat \bfR$ based on $\bfS$ and $\bfR$, respectively, such that as $n\to \infty$,
\begin{equation}\label{eq:setew1}
\sqrt{n/p} \,\norm{{n^{-1}}\widehat\bfS- \bfI_p}\cip 0  \quad \text{ and } \quad   \sqrt{n/p}\, \norm{\widehat\bfR- \bfI_p}\cip 0.
\end{equation}
In view of \eqref{eq:sdsdsdqw}, we know that we are able to deal with the diagonal. A natural approach is to eliminate the smallest off-diagonal entries by thresholding.
 Bickel and Levina \cite{bickel:levina:2008a,bickel:levina:2008b} considered estimators of the form
\begin{equation}\label{eq:estimators}
\widehat \bfS=\big(S_{ij} \1(|S_{ij}|>n \, t_n) \big) \quad \text{ and } \quad \widehat \bfR=\big( R_{ij} \1(|R_{ij}|>t_n)\big)\,,
\end{equation}
for some threshold sequence $t_n\to 0$. Choosing $t_n=t_n(C)=C \sqrt{(\log p) /n}$ with a sufficiently large constant $C$, \cite[Theorem~1]{bickel:levina:2008a} shows \eqref{eq:setew1} for standard normal $X$. 
In view of Remark \ref{cor:cov1}, the order of the threshold perfectly matches the order of the largest off-diagonal entries. Based on our results, we provide a simple proof of \eqref{eq:setew1} for a more general class of distributions.

\begin{corollary}\label{thm:pspsp}
Assume $p/n\to \gamma \in [0,\infty)$ and the conditions of Theorem~\ref{thm:corr}.
Then the estimators $\widehat \bfR$, $\widehat \bfS$ in \eqref{eq:estimators} 
specified for $t_n(C)$, $C>2$,
satisfy relation \eqref{eq:setew1}.
\end{corollary}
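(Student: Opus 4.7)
The plan is to exploit the fact that with $C>2$ the threshold $t_n(C)=C\sqrt{(\log p)/n}$ (and $n\,t_n(C)=C\sqrt{n\log p}$ in the $\widehat\bfS$ case) asymptotically exceeds the largest absolute value of an off-diagonal entry of $\bfR$ (respectively $\bfS$), while staying far below every diagonal entry. So, with probability tending to one, the thresholding wipes out every off-diagonal entry and preserves every diagonal one. This reduces $\widehat\bfR$ to $\bfI_p$ and $\widehat\bfS$ to $\diag(\bfS)$, after which the claim follows immediately from \eqref{eq:sdsdsdqw}.

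First I would introduce the good event
\beao
E_n = \Big\{\sqrt{n/\log p}\,\max_{1\le i<j\le p}|R_{ij}|\le C\Big\}
\cap \Big\{\max_{1\le i<j\le p}|S_{ij}|/\sqrt{n\log p}\le C\Big\}.
\eeao
By Corollary~\ref{cor:corr} the first maximum converges in probability to $2$, and by Remark~\ref{cor:cov1} so does the second. Since $C>2$, $\P(E_n)\to 1$. On $E_n$ every off-diagonal indicator $\1(|R_{ij}|>t_n)$ and $\1(|S_{ij}|>nt_n)$ vanishes, so all off-diagonal entries of $\widehat\bfR$ and $\widehat\bfS$ are zero.

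Next I would verify that the diagonals are kept intact. Because $R_{ii}\equiv 1$ and $t_n\to 0$, one has $\widehat R_{ii}=1$ for every $i$ once $n$ is large, so $\widehat\bfR=\bfI_p$ on $E_n$ for all large $n$. For $\bfS$, \eqref{eq:sdsdsdqw} already gives $\max_{i\le p}|S_{ii}/n-1|\cip 0$, hence $\min_i S_{ii}>n\,t_n$ with probability tending to one, say on an event $F_n$. On $E_n\cap F_n$ we have $\widehat\bfS=\diag(\bfS)$, and therefore
\beao
\sqrt{n/p}\,\norm{\widehat\bfR-\bfI_p}=0,\qquad
\sqrt{n/p}\,\norm{n^{-1}\widehat\bfS-\bfI_p}=\sqrt{n/p}\,\norm{n^{-1}\diag(\bfS)-\bfI_p}\cip 0,
\eeao
the last convergence being exactly \eqref{eq:sdsdsdqw}. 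Both statements in \eqref{eq:setew1} follow.

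No substantial obstacle is expected: the heavy lifting is absorbed in the point-process convergence of Theorem~\ref{thm:corr} and its $\bfS$-analogue in Remark~\ref{cor:cov1}, which precisely pin down the order $\sqrt{(\log p)/n}$ of the extremal off-diagonals and thereby match the threshold. The only minor checks are (i) that $p/n\to\gamma\in[0,\infty)$ is compatible with the assumptions of Theorem~\ref{thm:corr} under each of (C1)--(C3'), which holds as soon as enough moments of $X$ are present, and (ii) the diagonal preservation for $\widehat\bfS$, which is an immediate consequence of \eqref{eq:sdsdsdqw} together with $t_n\to 0$.
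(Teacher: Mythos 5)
Your argument is correct and follows the same route as the paper's own (very brief) proof: the thresholds $t_n(C)$ with $C>2$ asymptotically dominate the extreme off-diagonal entries by Remark~\ref{cor:cov1} and Corollary~\ref{cor:corr}, so thresholding kills the off-diagonal part with probability tending to one, while the diagonal is preserved and handled by \eqref{eq:sdsdsdqw} and $\diag(\bfR)=\bfI_p$. You merely spell out the high-probability events and the diagonal-preservation step (e.g.\ $\min_i S_{ii}>n\,t_n$ eventually) that the paper leaves implicit, which is a welcome but not substantively different elaboration.
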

\begin{proof}
The diagonal part is taken care of by \eqref{eq:sdsdsdqw} and the fact that $\diag(\bfR)=\bfI_p$. The off-diagonal entries of $\widehat \bfR$ and $\widehat \bfS$ asymptotically vanish in view of Remark \ref{cor:cov1} and Corollary \ref{cor:corr}, respectively.  
\end{proof}
Corollary \ref{thm:pspsp} shows that the order of the threshold $t_n(C)$ is not affected by the distributional assumption. 
Under (C1) we thus allow for $p=O(n^{(s-2)/4})$ provided $\E[|X|^s]<\infty$. 
For comparison,  Bickel and Levina \cite[p.~2585]{bickel:levina:2008a} showed the first limit relation in \eqref{eq:setew1} for the bigger threshold $t_n(C)= C p^{4/s}/\sqrt{n}$ and dimension $p=o(n^{s/8})$.

\subsection{An independence test}  If the data $(X_{it})$ is centered 
Gaussian with identical \ds\ the null hypothesis of independence is equivalent to $H_0: n^{-1} \E[\bfS]=\bfI_p$.
Based on \eqref{eq:jiang2}, Jiang \cite{jiang:2004b} proposed the following test of $H_0$ with significance level $\alpha\in (0,1)$:
\begin{equation*}
\Psi_{\alpha}= \1(n W_n^2-4 \log p + \log \log p\ge q_{\alpha})\,, 
\end{equation*}
where 
\begin{equation*}
q_{\alpha}= -\log (8\pi ) - 2 \log \log (1-\alpha)^{-1}
\end{equation*}
is the $(1-\alpha)$-quantile of the limiting non-standard Gumbel distribution. If $\Psi_{\alpha}=1$, we reject $H_0$. Properties of this test are studied in \cite{cai:liu:xia:2013}. 
\par
In view of Corollary \ref{cor:joint} we can propose a multitude of 
alternative tests based on the joint asymptotic distribution of the $k$ largest or smallest off-diagonal entries of $\bfS$ and $\bfR$, respectively. Under the conditions of Theorem~\ref{thm:cov} we have as $\nto$,
\begin{equation*}
\wt d_p\Big (\frac{(S_{(1)},\ldots,S_{(k)})}{\sqrt{n}}-\wt d_p\Big)\cid 
(-\log \Gamma_1, \ldots, -\log \Gamma_k)
\end{equation*}
and $\Gamma_i=E_1+\cdots+E_i$ for iid standard exponential random variables $(E_j)$. For $k\ge 1$ and $\alpha\in (0,1)$, consider a set $\mathcal{A}_k^{\alpha}\subset \R^k$ such that 
\begin{equation*}
\P\big((-\log \Gamma_1, \ldots, -\log \Gamma_k)\in \mathcal{A}_k^{\alpha}\big)=1-\alpha
\end{equation*}
and define the test $T(\mathcal{A}_k^{\alpha})$ by
\begin{equation*}
T(\mathcal{A}_k^{\alpha})=\1\Big(\wt d_p\Big (\frac{(S_{(1)},\ldots,S_{(k)})}{\sqrt{n}}-\wt d_p\Big) \notin \mathcal{A}_k^{\alpha}\Big)\,.
\end{equation*}
If $T(\mathcal{A}_k^{\alpha})=1$, we reject $H_0$. Then $T(\mathcal{A}_k^{\alpha})$ is an \asy\ independence test with significance level~$\alpha$.
\par
Convenient univariate test statistics can be constructed from 
spacings of $S_{(1)},\ldots,S_{(k)}$. An advantage of using spacings 
is that one avoids centering by $\wt d_p$. For example, consider for some $k\ge 2$,
\beao
T_k^{(1)}&=& \wt d_p\, (S_{(1)}-S_{(k)})/\sqrt{n}\,,\\
T_k^{(2)}&=& \wt d_p \max_{i=1,\ldots,k-1} (S_{(i)}-S_{(i+1)})/\sqrt{n}\,,\\
T_k^{(3)}&=& \wt d_p^2 \dfrac 1n \sum_{i=1}^{k-1}  (S_{(i)}-S_{(i+1)})^2\,.
\eeao
Recall the well-known fact that
\beao
\Big(\dfrac{\Gamma_1}{\Gamma_{k+1}},\ldots,\dfrac{\Gamma_k}{\Gamma_{k+1}}\Big)
\eqd \big(U_{(k)},\ldots,U_{(1)}\big)\,,
\eeao
where the right-hand vector consists of the order statistics of $k$ iid uniform
random variables on $(0,1)$.
Then we have
\beao
T_k^{(1)}&\std& \log \big(\Gamma_k/\Gamma_1\big)=\log 
\dfrac{\Gamma_k/\Gamma_{k+1}}{\Gamma_1/\Gamma_{k+1}}\eqd \log \big(U_{(1)}/U_{(k)}\big)\,,\\
T_k^{(2)}&\std& \max_{i=1,\ldots,k-1} \log (\Gamma_{i+1}/\Gamma_{i}) 
\eqd  \max_{i=1,\ldots,k-1} \log (U_{(k-i)}/U_{(k-i+1)})\,,\\
T_k^{(3)}&\std& \sum_{i=1}^{k-1} (\log (\Gamma_{i+1}/\Gamma_{i}))^2 
\eqd  \sum_{i=1}^{k-1} (\log (U_{(k-i)}/U_{(k-i+1)}))^2\,.\\
\eeao
Now, choosing $q_\alpha$ as 
the $(1-\alpha)$-quantiles of the limiting \rv s 
we have $T(\mathcal A_k^\alpha)=\1 ( T_k^{(i)} > q_\alpha)$, $i=1,2,3$.

\section{Proof of Theorem~\ref{thm:d_p}}\label{sec:proof1}\setcounter{equation}{0}
In view of Remark~\ref{rem:not} it suffices to prove the theorem under (C1). Throughout this proof we assume the standard conditions on $(X_{it})$.
\par
We start with a useful auxiliary result due to Einmahl 
\cite{einmahl:1989} (Corollary 1(b), p. 31, in combination 
with Remark on p. 32). 
\ble\label{lem:einmahl1} Consider independent $\bbr^d$-valued 
random vectors $\xi_1,\ldots,\xi_n$  with mean zero.
Assume that $\xi_i$, $i=1,\ldots,n$, has finite \mgf\ in 
some neighborhood of the 
origin and that the covariance matrix 
$\var(\xi_1+\cdots+\xi_n)=B_n \bfI_d$ where $B_n>0$ and $\bfI_d$ denotes the identity matrix.  Let $\eta_k$ be independent
$N(0,\sigma^2\var(\xi_k))$ random vectors independent of $(\xi_k)$, and $\sigma^2\in (0,1]$. Let $\xi_k^\ast=\xi_k+\eta_k$, $k=1,\ldots,n$, and write $p_n^\ast$ for the 
density of $B_n^{-1/2}(\xi_1^\ast+\cdots+\xi_n^\ast)$. Choose $\alpha\in (0, 0.5)$
\st\ 
\beam\label{eq:alpha}
\alpha \sum_{k=1}^n \E[|\xi_k|^3 \exp(\alpha |\xi_k|)]\le B_n\,,
\eeam
and write
\beao
\beta_n=\beta_n(\alpha)= B_n^{-3/2}\sum_{k=1}^n \E[|\xi_k|^3\exp(\alpha |\xi_k|)]\,,
 \eeao
where $|x|$ denotes the Euclidean norm.
If 
\beqq\label{eq:Tn}
|x|\le c_1 \alpha\,B_n^{1/2}\,,\qquad \sigma^2\ge -c_2 \beta_n^2\log \beta_n\,,\qquad B_n\ge c_3 \alpha^{-2}\,,
\eeqq
where $c_1,c_2,c_3$ are constants only depending on $d$, then
\beqq\label{eq:22b}
p_n^\ast(x)= \varphi_{(1+\sigma^2) \bfI_d } (x) \exp(\ov T_n(x))\;\mbox{ with }\;
|\ov T_n(x)|\le c_4\beta_n\,(|x|^3+1)\,, 
\eeqq
where $\varphi_{\boldsymbol{\Sigma}}$ is the density of a $N(0, \boldsymbol{\Sigma})$ random vector 
and $c_4$ is a constant only depending on $d$.
\ele

\begin{proof}[Proof under {\bf (C1)}] We proceed by formulating and proving various auxiliary results.
We will use the following notation: 
$c$ denotes any positive constant whose value is not of interest, sometimes we 
write $c_0,c_1,c_2,\ldots$ for positive constants whose value or size 
is relevant in the proof, 
\beao
\ov X_i&=&X_i\1(|X_i|\le n^{1/s})-\E[X\1(|X|\le n^{1/s})]\,,\qquad
\underline X_i= X_i-\ov X_i\,,\\
\ov S_n&=&\sum_{i=1}^n \ov X_i\,,\qquad \underline S_n= S_n-\ov S_n\,.
\eeao 
\par

Next we consider an approximation of the \ds\ of $\ov S_n$.
\ble\label{lem:1}
Let $\wt p_n$ be the density of 
\beao
n^{-1/2} \sum_{i=1}^n (\ov X_i +\sigma_n N_i)
\eeao 
where $(N_i)$ is iid $N(0,1)$, independent of $(X_i)$ and 
$\sigma_n^2=\var(\ov X)s_n^2$. If  $n^{-2c_6}\log n\le s_n^2\le 1$
with $c_6=0.5-(1-\delta)/s$ for arbitrarily small $\delta>0$, then the
relation
\beao
\wt p_n(x)= \varphi_{1+\sigma_n^2}(x)(1+o(1))\,,\qquad \nto\,,
\eeao
holds uniformly for $|x|= o(n^{1/6-1/(3s)})$. 
\ele
\begin{proof}
We apply Lemma~\ref{lem:einmahl1} to the iid \rv s 
$\xi_i=\ov X_i$, $i=1,\ldots,n$.
Notice that $\E[\ov X]=0$ and $B_n=\var(\ov S_n)=n\,\var(\ov X)$.
Choose
$\wt \alpha= c_5 n^{-1/s}$.
Then
\beao
 \wt \alpha \sum_{i=1}^n \E [|\ov X_i|^3\exp(\wt \alpha|\ov X_i|)]
&=& \wt \alpha\,n\, \E[|\ov X|^3\exp(\wt \alpha|\ov X|)]\\
&\le & c_5\, n^{1-1/s}\,\E[|\ov X|^3] \,\exp(2c_5)\\
&\le & 8 c_5\exp(2c_5) n^{1-\delta/s} \,\E[| X|^{2+\delta}]\,,
\eeao
where $\delta\in (0,1)$ is chosen \st\ $\E[|X|^{2+\delta}]<\infty$.
Hence \eqref{eq:alpha} is satisfied for $\alpha=\wt\alpha$ and sufficiently small $c_5$.
\par
Next choose 
\beam\label{eq:22ab}
\wt \beta_n &=&B_n^{-3/2}\sum_{i=1}^n \E \big[|\ov X_i|^3\,\exp(\wt \alpha |\ov X_i|)\big]\nonumber
=B_n^{-3/2}\,n\, \E[|\ov X|^3 \exp(\wt \alpha |\ov X|)]\nonumber\\
&\le& c\, B_n^{-3/2}n^{1+(1-\delta)/s}\,\E[|\ov X|^{2+\delta}]
\le c\,n^{-c_6}\,,
\eeam
where $\delta$ is chosen as above and $c_6=0.5-(1-\delta)/s$.
\par 
Next we consider \eqref{eq:Tn}. We can choose $x$ according to the restriction
\beam\label{eq:b1}
|x|\le c_1\, \wt \alpha\,B_n^{1/2}\sim c\,n^{1/2-1/s}\,.
\eeam
By \eqref{eq:Tn} and \eqref{eq:b1} we can choose $\sigma^2=\sigma_n^2$ according as 
\beam\label{eq:sigmaex}
1\ge \sigma_n^2\ge c\,\log n\,n^{-2c_6}\,.
\eeam
Moreover, $B_n\ge c_3\,\wt \alpha^{-2}$. An application of \eqref{eq:22b}
yields
\beao
\wt p_n(x)= \varphi_{1+\sigma_n^2}(x) \exp(\ov T_n(x))\quad\mbox{ for }\quad
|\ov T_n(x)|\le c_4\wt \beta_n(|x|^3+1)\,,
\eeao
but in view of \eqref{eq:22ab} and \eqref{eq:b1}, $\wt \beta_n(|x|^3+1)=o(1)$
uniformly for $|x|^3=o(\min(n^{0.5-1/s},n^{c_6}))= o(n^{0.5-1/s})$ for arbitrarily small $\delta>0$. That is, the remainder
term $|\ov T_n(x)|$ converges to zero, uniformly for the $x$ considered.
This proves the lemma.
\end{proof}
We add another auxiliary result.
\ble\label{lem:2}
Assume that $p=p_n\to\infty$ and $p= O(n^{(s-2)/2})$. 
Then for $x\in\bbr$, $c_6$ as in Lemma~\ref{lem:1}, an iid $N(0,1)$ sequence $(N_i)$ and $\sigma_n^{2}= c\,\log n\, n^{-2c_6}$, we have
\beao
p\,\P\Big(n^{-1/2}\sum_{i=1}^n (\ov X_i+\sigma_n N_i)>d_p +x/d_p\Big)\to \ex^{-x}\,,
\qquad \nto\,.
\eeao
\ele
\begin{proof}
Write $ y_n=\sqrt{ (s-2)\log n}$. By virtue of Lemma~\ref{lem:1}
we observe that for any $C>1$,
\begin{align*}
	& P_1=\P\Big(d_p +x/d_p<n^{-1/2}\sum_{i=1}^n (\ov X_i+\sigma_n N_i)\le y_n\Big)
\sim \int_{d_p+x/d_p}^{y_n} \varphi_{1+\sigma_n^2}(y)\,dy \,,\\
& P_2=  \P\Big(y_n <n^{-1/2}\sum_{i=1}^n (\ov X_i+\sigma_n N_i)\le C\,y_n\Big)
\sim \int_{y_n}^{Cy_n} \varphi_{1+\sigma_n^2}(y)\,dy \,.
\end{align*}
However, using Mill's ratio and the definition of $d_p$, we have that
\beao
\begin{split}
	p\,P_1&\sim  \ex^{-x}\,
\dfrac{\ov \Phi\Big(\dfrac{d_p+x/d_p}{\sqrt{1+\sigma_n^2}}\Big)}
{\ov\Phi(d_p+x/d_p)}- p \,\ov \Phi\Big(\dfrac{y_n}{\sqrt{1+\sigma_n^2}}\Big)
\\
&\sim \ex^{-x}\,
\exp\Big(0.5 (d_p+x/d_p)^2 \dfrac{\sigma_n^2}{1+\sigma_n^2}\Big)-\dfrac{1}{  \sqrt{2 \pi} \sqrt{(s-2) \log n}} p\, n^{-(s-2)/2}\,
\,,
\end{split}
\eeao
but the \rhs\ converges to $\e^{-x}$ since $(d_p+x/d_p)^2\sigma_n^2\sim d_p^2 \sigma_n^2=o(1)$, $p\, n^{-(s-2)/2}=O(1)$ and $(\log n)^2 n^{-2c_6}=o(1)$.
A similar argument shows $pP_2\to 0$.
\par
We also have
\beao
	\lefteqn{\P\Big(n^{-1/2}\sum_{i=1}^n (\ov X_i+\sigma_n N_i)> C\,y_n\Big) } \\
&\le &
\P\Big(n^{-1/2}\ov S_n > 0.5\,C\,y_n\Big)
+ \ov \Phi \big( 0.5 \,C y_n/\sigma_n\big)\\
&= & P_3+P_4\,.
\eeao
It is easy to see that $pP_4\to 0$.
We observe that 
\beao
|\ov X_i|&\le& n^{1/s}\big(1+o(n^{-1/s})\big)=c_n\,,\qquad a.s.\\
\var(\ov X)&\le &\E[X^2\1(|X|\le n^{1/s})]\le \var(X)=1\,.
\eeao 
We apply Prokhorov's inequality (Petrov  \cite[Chapter III.5]{petrov:1972}) 
for any $C>1$,
\beao
\lefteqn{p\,\P\big(\ov S_n>C \sqrt{n} \,y_n\big)  } \\
&  \le & p\,\exp\Big( - \dfrac{C\sqrt{(s-2)n\log n}}{2c_n}\,
\log \big(1+ \dfrac{C\sqrt{(s-2)n\log n}\,c_n} {2n\var(\ov X)}\big)\Big)\\
& \le &  p\,\exp\big(- \dfrac{C^2}{2}\dfrac{(s-2)\,\log n}{4}\big)\\
& =  & p\,n^{- C^2(s-2)/8}.
\eeao
The \rhs\ converges to zero for sufficiently large $C$.
This proves the lemma.
\end{proof}
Write $(X_{it})_{t\ge 1}$ for the iid \seq\ of the summands constituting $S_{n}^{(i)}$
and 
\beao
\ov S_{n}^{(i,N)}=\sum_{t=1}^n (\ov X_{it}+\sigma_n N_i)=:\ov S_{n}^{(i)}+\sigma_n \sqrt{n}\,\wt N_i\,,
\eeao
where $(\wt N_i)$ are iid standard normal \rv s independent of everything else.
Then by Lemma~\ref{lem:2},
\beao
\P\Big(\max_{i=1,\ldots,p} d_p\,(\ov S_{n}^{(i,N)}/\sqrt{n}-d_p)\le x\Big)\to \Lambda(x)\,,\qquad 
x\in\bbr\,,\qquad \nto\,. 
\eeao
We have 
\beao
d_pn^{-1/2}\max_{i=1,\ldots,p} \big|\ov S_{n}^{(i)}- \ov S_{n}^{(i,N)}|
&\le & d_p \max_{i=1,\ldots,p} |\sigma_n(\wt N_i-d_p)| + \sigma_n\,d_p^2\\
&\le &d_p\sigma_n \max_{i=1,\ldots,p} |\wt N_i-d_p| + \sigma_n\,d_p^2\\
&=& O_\P( d_p^2 \sigma_n)=o_\P(1)\,,\qquad \nto\,.
\eeao
Therefore 
\beqq\label{eq:springer}
\P\Big(d_p\big(\max_{i=1,\ldots,p} (\ov S_{n}^{(i)}/\sqrt{n}-d_p)\le x\Big)\to \Lambda(x)\,,\qquad 
x\in\bbr\,,\qquad \nto\,,
\eeqq
and the latter relation is equivalent to
\beam\label{eq:heklp0}
p\,\P(  \ov S_n/\sqrt{n}>d_p+x/d_p)\to \ex^{-x}\,, \qquad x\in\bbr\,,\qquad\nto\,.
\eeam
\par
Our next goal is to prove that we can replace $\ov S_n$ by $S_n$ in the latter relation. In view of the equivalence between \eqref{eq:springer} and 
\eqref{eq:heklp0} it suffices to show \eqref{eq:springer} with $\ov S_{n}^{(i)}$
replaced by $S_{n}^{(i)}$.
Therefore we will show that
\beao
\dfrac{d_p}{\sqrt{n}} \max_{i=1,\ldots,p}\big|\underline S_{n}^{(i)}\big|\stp 0\,.
\eeao
We have by the Fuk-Nagaev inequality \cite[p. 78]{petrov:1995}
for $y>0$ and suitable constants $c_0,c_1>0$,
\beam \label{eq:expo}
	\P\Big(d_p \max_{i=1,\ldots,p} |\underline S_{n}^{(i)}/\sqrt{n}|>y\Big) 
& \le &  p\,\P\big( |\underline{S}_n|>\sqrt{n}y/d_p\big)\nonumber\\
& \le &   c_0 n\,\E[|\underline{X}|^s] \big(\dfrac{\sqrt{n} y}{d_p}\big)^{-s}+
\exp\Big(-c_1 \dfrac{y^2}{d_p^2\,\var (\underline X)}\Big)\,.
\eeam
Using partial integration and Markov's inequality of order $s$, 
we find that $\var( \underline X)\le c\,n^{-0.5+1/(2s)}$
holds if $\E[|X|^s]<\infty$.
Combining this bound with the rate $p=O(n^{-(s-2)/2})$, we see that
$d_p^2\,\var (\underline X)\to 0$ and therefore
the exponential term in \eqref{eq:expo} vanishes. The polynomial term
in \eqref{eq:expo} converges to zero for the same reason. 
This proves \eqref{eq:springer} with $\ov S_{n}^{(i)}$ replaced by $S_{n}^{(i)}$
and finishes the proof of the theorem.
\end{proof}
\section{Proofs of sample covariance and correlation results}\label{sec:proofcov11}
\subsection{Proof of Theorem~\ref{thm:cov}}\label{sec:proofcov} \setcounter{equation}{0}

By Kallenberg's criterion for the convergence of simple point processes (see for instance \cite[p.~233, Theorem~5.2.2]{embrechts:kluppelberg:mikosch:1997})  it suffices to verify the
following conditions:
\begin{enumerate}
\item[(i)] For any $-\infty <a<b<\infty$, one has $\E[N_n^S(a,b]]\to \E[N(a,b]]=\mu(a,b]$ as $\nto$.
\item[(ii)] For $B=\cup_{i=1}^{\ell} (b_i,c_i]\subset (-\infty,\infty)$ with 
$-\infty<b_1<c_1<\cdots<b_{\ell}<c_{\ell}<\infty$, one has\\ $\P(N_n^S(B)=0)\to \P(N(B)=0)=\ex^{-\mu(B)}$ as $\nto$. 
\end{enumerate}
We start with (i). Note that $\mu(a,b]=\ex^{-a}-\ex^{-b}$. Since the assumptions of Theorem~\ref{thm:d_p} hold it follows from \eqref{eq:lll} (with $p$ replaced by $p(p-1)/2$),
that as $\nto$
\beao
\E[N_n^S(a,b]]= \dfrac{p(p-1)}{2} \P\big(\wt d_p +a /\wt d_p<S_{12}/\sqrt{n}<\wt d_p+b/\wt d_p)\to
\mu(a,b]\,.
\eeao

To show (ii), we consider
\beao
1- \P(N_n^S(B)=0)= \P\Big( \bigcup_{1\le i<j\le p} A_{ij}\Big)\,,
\qquad
A_{ij}=\{\wt d_p(S_{ij}/\sqrt{n} -\wt d_p)\in B\}\,.
\eeao
By an inclusion-exclusion argument we get for $k\ge 1$,
\begin{equation}\label{eq:ersgt}
\sum_{d=1}^{2k} (-1)^{d-1} W_d \le \P\Big( \bigcup_{1\le i <j \le p} A_{ij} \Big) \le \sum_{d=1}^{2k-1} (-1)^{d-1} W_d\,,
\end{equation}
where 
\begin{equation*}
W_d =\mbox{$\sum_{(I,J)\in I_d}$} \P( A_{i_1 j_1} \cap \cdots \cap A_{i_d j_d})=:\mbox{$\sum_{(I,J)\in I_d}$}\,q_{(I,J)}
\end{equation*}
and the summation runs over the set
\begin{equation*}
\begin{split}
I_d=\{ (I,J)=((i_1,j_1), \ldots , (i_d,j_d))  &\text{ such } \text{that }  1 \le i_t <j_t \le p, t=1, \ldots, d \,,\\
 & \text{ and } (i_1,j_1) < (i_2,j_2) < \cdots < (i_d,j_d)\}\,. 
\end{split}
\end{equation*}
In the definition of $I_d$, we use the lexicographic ordering of pairs $(i_s, j_s), (i_t,j_t)$:
\begin{equation*}
(i_s, j_s) < (i_t, j_t)  \quad \text{ if and only if } \quad  i_s<i_t \text{ or } (i_s=i_t \text{ and } j_s<j_t)\,.
\end{equation*}
A combinatorial argument yields 
\begin{equation}\label{eq:dsgds}
|I_d|=  \binom{\tfrac{p(p-1)}{2}}{d} \sim \frac{1}{d!} \Big( \frac{p^2}{2}\Big)^d\,, \qquad \nto\,.
\end{equation}
{\bf Proof of (ii) under (C1).}
Consider the set $\wh I_d$ consisting of all elements $(I,J)\in I_d$ such that all $i_t,j_t, t=1, \ldots, d$ are mutually distinct. For $(I,J) \in \wh I_d$  the random variables $S_{i_t,j_t}, t=1,\ldots d,$ are iid and therefore 
\begin{equation}\label{eq:etewt}q_{(I,J)}=\big( \P(A_{12}) \big)^d\,.
\end{equation}
For  $(I,J)\in I_d\backslash \wh I_d$ we write
\beao
\bfS_n^{(I,J)}= \big(S_{i_1j_1},\ldots,S_{i_d,j_d}\big)^\top
=\sum_{t=1}^n \big(X_{i_1t}{ X_{j_1t}},\ldots,X_{i_dt}X_{j_d t}\big)^\top=:\sum_{t=1}^n\xi_t\,,
\eeao
and also ${\bf1}=(1,\ldots,1)^\top\in\bbr^d$.
The iid $\bbr^d$-valued summands $\xi_t$ with generic element $\xi$ 
have mean zero and covariance matrix $ \bfI_d$.
We have
\beao
q_{(I,J)}
&=& \P\Big( n^{-1/2} \bfS_n^{(I,J)} \in \wt d_p \,{\bf1} + B^d/\wt d_p\Big)
\eeao
We will apply Lemma~\ref{lem:einmahl1} to $(\xi_t)$.  We will prove it under 
(C1); the proof under {(C2') and (C3')} is analogous; we will indicate some necessary changes. In this case, 
$\E[|\xi_i|^s]<\infty$ for some $s>2$. Write 
\beao
\ov \xi_t &=& \Big(\xi_t^{(l)}\1\big( |\xi_t^{(l)}|\le n^{1/s}\big)- 
\E[ \xi^{(l)}\1( |\xi^{(l)}|\le n^{1/s})]\Big)^\top_{l=1,\ldots,d}
\,,  \\
\underline\xi_t & = & \xi_t- \ov \xi_t\,,\\ 
\ov \bfS_n^{(I,J)}&=& \sum_{t=1}^n \ov \xi_t\,,\qquad \underline \bfS^{(I,J)}_n= \bfS_n^{(I,J)}-
\ov \bfS_n^{(I,J)}=\sum_{t=1}^n \underline \xi_t\,.
\eeao
\par
Proceeding as in the proof of Lemma~\ref{lem:1}, we obtain
the following result.
\ble\label{lem:11}
Let $\wt p_n$ be the density of 
\beao
n^{-1/2} \sum_{i=1}^n (\ov \xi_i +\sigma_n N_i)
\eeao 
where $(N_i)$ is iid $N(0,\bfI_d)$, independent of $(\xi_i)$ and 
$\sigma_n^2=\var(\ov \xi^{(1)})s_n^2$. If $n^{-2c_6}\log n\le s_n^2\le 1$
with $c_6=0.5-(1-\delta)/s$ for arbitrarily small $\delta>0$, then the
relation
\beao
\wt p_n(x)= \varphi_{(1+\sigma_n^2) \bfI_d}(x)(1+o(1))\,,\qquad \nto\,,
\eeao
holds uniformly for $|x|= o(n^{1/6-1/(3s)})$. 
\ele
Following the lines of the proof of Lemma~\ref{lem:2},
we obtain the following result:
\ble\label{lem:22}
Assume that $p=p_n\to\infty$ and $p^2= O(n^{(s-2)/2})$. 
Then for
$\sigma_n^{2}= c\,\log n\, n^{-2c_6}$ and an iid $N(0,1)$
\seq\ $(\wt N_i)$, uniformly for $(I,J)$ in $I_d$,
\beqq\label{eq:ext}
\Big(\dfrac{p^2}{2}\Big)^d\,q_{(I,J)}
\sim
\Big(\dfrac{p^2}{2} 
\P\Big(n^{-1/2} \sum_{t=1}^n \big(\ov \xi_t^{(1)} +\sigma_n \wt N_i\big)\in B\Big)\Big)^d
\sim (\mu(B))^d\,.
\eeqq
\ele
Finally, we need to prove that $\ov \bfS_n^{(I,J)}$ in \eqref{eq:ext}
can be replaced by $\bfS_n^{(I,J)}$. However, this follows in the same way as the 
corresponding steps in the proof of Theorem~\ref{lem:einmahl}.
Indeed, since we need to show that $n^{-1/2}\underline \bfS_n^{(I,J)}$ does not
contribute \asy ally to  $n^{-1/2}\bfS_n^{(I,J)}$ it suffices to prove this fact
for each of the components of  $n^{-1/2}\underline \bfS_n^{(I,J)}$.
\par
We conclude that as $\nto$
\beqq\label{eq:esrs}
W_d=\Big(\sum_{(I,J)\in I_d\backslash \wh I_d}+\sum_{(I,J)\in \wh I_d}\Big)q_{(I,J)} 
\sim \frac{1}{d!} \Big( \frac{p^2}{2}\Big)^d \big( \P(A_{12}) \big)^d
\sim \dfrac{(\mu(B))^d}{d!}.
\eeqq
We recall that \eqref{eq:ersgt} provides an upper and 
lower bound for $\P(N_n(B)=0)$. Letting first $\nto$ and then 
$k \to \infty$, thanks to \eqref{eq:esrs} we see that both bounds converge to the same limit. More precisely, we have
\begin{equation*}
\lim_{\nto} P(N_n(B)=0) = 1- \sum_{d=1}^{\infty} (-1)^{d-1} \frac{\big(\mu(B)\big)^d}{d!} = \sum_{d=0}^{\infty} \frac{\big(-\mu(B)\big)^d}{d!} = \e^{-\mu(B)}\,.
\end{equation*}
The proof of (ii) is complete.\\
{\bf Proof of (ii) under (C2'), (C3').} 
Write  $b_0=\min_{1\le q\le \ell} b_q$, $c_0= \max_{1\le q\le \ell} c_q$
and for $(I,J)\in I_d\backslash \wh I_d$, 
\beao
\wt S_n=S_{i_1j_1}+ \cdots + S_{i_dj_d}=\sum_{t=1}^n (X_{i_1t} X_{j_1t} + \cdots + X_{i_dt} X_{j_dt})\,.
\eeao
We have
\beam\label{eq:dfgh}
q_{(I,J)}  
&\le& \P \Big( \big( \frac{\wt S_n}{d \sqrt{n}} - \wt d_p \big)\, \wt d_p \in (b_0, c_0] \Big)\nonumber\\
&= &\P \Big( \sqrt{d} \big(b_0/{\wt d_p} +\wt d_p \big) < \frac{\wt S_n}{\sqrt{d\,n}}<\sqrt{d} \big(c_0/\wt d_p +\wt d_p \big) \Big)
\eeam
 Note that $\wt S_n/\sqrt{d}$ has iid summands with mean zero and unit variance.
Since $\sqrt{d}(c_0/\wt d_p +\wt d_p )=o(n^{1/6})$ under {(C3')} 
and $\sqrt{d}(c_0/\wt d_p +\wt d_p )=o(n^{1/6}\wedge g_n)$ under {(C2')}
applications of 
\cite[Theorem 1 in Section VIII.2]{petrov:1972}  and  \cite[Theorem~4]{nagaev:1969}, respectively, yield
\beao
q_{(I,J)}&\le & c\Big(\ov \Phi \big(\sqrt{d} (b_0/{\wt d_p} +\wt d_p)\big)
-\ov \Phi \big(\sqrt{d} (c_0/{\wt d_p} +\wt d_p)\big)\Big)= O(p^{-2d+\vep})\,.
\eeao
for an arbitrarily small $\vep>0$.
This shows that \eqref{eq:esrs} holds. Now one can proceed as under condition
(C1).

\subsection{Proof of Theorem~\ref{thm:covm}}\label{sec:proofcovm}

We proceed as in the proof Theorem~\ref{thm:cov} and show $(i),(ii)$ therein. 
For $-\infty <a<b<\infty$, it follows from \eqref{eq:lll} that as $\nto$
\beao
\begin{split}
	\E[N_n^{(m)} (a,b]] &= \binom{p}{m} \P\big(d_{p,m} +a /d_{p,m}<S_{12}/\sqrt{n}<d_{p,m}+b/d_{p,m}) \\
	& \to \ex^{-a}-\ex^{-b}\,.
\end{split}
\eeao
This proves condition (i). The proof of (ii) is completely analogous to the proof of Theorem~\ref{thm:cov}. The main difference is that $i<j$ needs to be replaced with $i_1<i_2< \cdots <i_m$. For example, instead of the index set $I_d$ whose elements are $d$ distinct $m$-tuples, with $|I_d|= \binom{\binom{p}{2}}{d}$; see \eqref{eq:dsgds}; one would get an index set $I_d^{(m)}$ of $d$ distinct $m$-tuples satisfying  $|I_d^{(m)}|= \binom{\binom{p}{m}}{d}$. We omit details.

\subsection{Proof of Theorem~\ref{thm:corr}}\label{sec:proofcorr}
 
First, assume $\var(X^2)=0$. 
Then $S_{ii}=n$ \as~ for all $i$ and hence $\sqrt{n}R_{ij} = S_{ij}/\sqrt{n}$ so that the claim follows immediately from Theorem~\ref{thm:cov}.
\par
In the remainder of this proof, we therefore assume $\var(X^2)>0$. 
By Theorem~\ref{thm:cov}, we already know that the point processes $N_n^S$
converge to a Poisson random \ms\ with mean measure $\mu(x,\infty]=\e^{-x}$, $x>0$. Our idea is to transfer the convergence of $N_n^S$ onto $N_n^R$. To this end, it suffices to show that (see \cite[Theorem 4.2]{kallenberg:1983}) $N_n^R-N_n^S \cip 0$ as $\nto$,
or equivalently that for any continuous function $f$ on $\R$ with compact support,
\begin{equation*}
\int f \dint N_n^R - \int f \dint N_n^S \cip 0\,, \quad \nto\,.
\end{equation*}
Suppose the compact support of $f$ is contained in $[K+\gamma_0, \infty)$ for some $\gamma_0>0$ and $K\in \R$. Since $f$ is uniformly continuous, $\omega(\gamma):= \sup \{|f(x)-f(y)|: x,y \in \R, |x-y| \le \gamma\}$ tends to zero as $\gamma\to 0$. 
We have to show that for any $\vep >0$,
\begin{equation}\label{eq:dgse}
\lim_{\nto} \P \Big( \Big| \sum_{1\le i<j \le p} \Big( f\big(  ( \sqrt{n} R_{ij} -\wt d_p) \wt d_p \big)- f\big( (S_{ij}/ \sqrt{n} -\wt d_p) \wt d_p \big) \Big)\Big| >\vep \Big) =0\,.
\end{equation}
On the sets
\begin{equation}\label{eq:An}
A_{n,\gamma}= \Big\{ \max_{1\le i<j \le p} \big| \wt d_p ( \sqrt{n} R_{ij} - S_{ij}/ \sqrt{n}) \big|  \le \gamma \Big\}\, ,\quad \gamma \in (0, \gamma_0) \,,
\end{equation}
we have 
\begin{equation*}
\big|f\big(  ( \sqrt{n} R_{ij} -\wt d_p) \wt d_p \big)- f\big( (S_{ij}/ \sqrt{n} -\wt d_p) \wt d_p \big) \big| \le \omega(\gamma) \,\1( (  S_{ij}/\sqrt{n} -\wt d_p) \wt d_p >K)\,.
\end{equation*}
Therefore, we see that, for $\gamma \in (0, \gamma_0)$,
\begin{equation}\label{eq:angamma}
\begin{split}
\P &\Big( \Big| \sum_{1\le i<j \le p} \Big( f\big(  ( \sqrt{n} R_{ij} -\wt d_p) \wt d_p \big)- f\big( (S_{ij}/ \sqrt{n} -\wt d_p) \wt d_p \big) \Big)\Big| >\vep, A_{n,\gamma} \Big)\\
&\le \P \Big( \omega(\gamma)\, \#\{1\le i<j \le p : (S_{ij}/ \sqrt{n} -\wt d_p) \wt d_p >K \} > \vep \Big)\\
&\le \frac{\omega(\gamma)}{\vep} \E\big[ \#\{1\le i<j \le p : (S_{ij}/ \sqrt{n} -\wt d_p) \wt d_p >K \}\big]\\
&= \frac{\omega(\gamma)}{\vep} \underbrace{\frac{p(p-1)}{2} \P((S_{12}/ \sqrt{n} -\wt d_p) \wt d_p >K)}_{\to \e^{-K} \text{ by Theorem~\ref{thm:d_p}}}
\,.
\end{split}
\end{equation}
Moreover, we have
\begin{equation*}
\begin{split}
\P(A_{n,\gamma}^c) &= \P\Big( \max_{1\le i<j \le p} \big| \wt d_p ( \sqrt{n} R_{ij} - S_{ij}/ \sqrt{n}) \big|  > \gamma \Big)\\
&= \P\Big( \max_{1\le i<j \le p} \wt d_p \frac{|S_{ij}|}{\sqrt{n}} \Big|  \frac{n}{\sqrt{S_{ii} S_{jj}}} - 1 \Big|  > \gamma \Big)\,.
\end{split}
\end{equation*}
Since $\max_{1\le i<j \le p} (S_{ij}/ \sqrt{n} -\wt d_p) \wt d_p \to \Lambda$, we get that $\max_{1\le i<j \le p} \wt d_p \frac{|S_{ij}|}{\sqrt{n}} = O_{\P}(\wt d_p^2)$. Thus,
\begin{equation}\label{eq:ancomplement}
\lim_{\nto} \P(A_{n,\gamma}^c) =0 
\end{equation}
is implied by 
\begin{equation}\label{eq:lefter1}
\lim_{\nto} \P\Big( \wt d_p^2 \max_{1\le i<j \le p} \Big|  \frac{n}{\sqrt{S_{ii} S_{jj}}} - 1 \Big|  > \beta \Big)=0\,, \quad \beta>0\,.
\end{equation}
Then taking the limits $\nto$ followed by $\gamma \to 0^+$ in \eqref{eq:angamma} and \eqref{eq:ancomplement} establishes \eqref{eq:dgse}.

It remains to prove \eqref{eq:lefter1}.
By the law of large numbers, $|S_{ii}/n| \cas 1$ as $\nto$. We have
\beao
\lefteqn{\max_{1\le i<j \le p} \Big|  \frac{n}{\sqrt{S_{ii} S_{jj}}} - 1 \Big| }\\
& & = \Big( \frac{n}{\min_{1\le i<j \le p} \sqrt{S_{ii} S_{jj}}} - 1 \Big) \vee \Big(1- \frac{n}{\max_{1\le i<j \le p} \sqrt{S_{ii} S_{jj}}} \Big)\\
& & \le \max_{1\le i \le p} \Big|  \frac{n}{S_{ii}} - 1 \Big|
\eeao
so that \eqref{eq:lefter1} follows from 
\begin{equation}\label{eq:lefter2}
\lim_{\nto} \P\Big( \wt d_p^2 \max_{1\le i \le p} \Big|  \frac{S_{ii}}{n} - 1 \Big|  > \beta \Big)=0 \,, \quad \beta>0\,.
\end{equation}
We have
\begin{equation*}
\begin{split}
\P\Big( \wt d_p^2 &\max_{1\le i \le p} \Big|  \frac{S_{ii}}{n} - 1 \Big|  > \beta \Big) 
\le p\, \P \Big( \frac{1}{\sqrt{n}} \Big| \sum_{t=1}^n (X_{1t}^2-1) \Big| > \frac{\beta \sqrt{n}}{\wt d_p^2}\Big) := \Psi_n\,.
\end{split}
\end{equation*}
It remains to prove that $\Psi_n\to 0$ under each of the conditions (C1), (C2'), (C3').
\par
First, assume (C1). Thus we have $\E[|X|^s]<\infty$ and $p=O(n^{(s-2)/4})$ for some $s>2$. An application of Markov's inequality yields
\beao
\Psi_n&\le& c\,\wt d_p^s\,n^{-(s+2)/4}\, \E\big[|S_{11}-n|^{s/2}\big]\,.
\eeao
By \cite[Lemma A.4]{davis:mikosch:pfaffel:2016} one has 
\beao
\E\big[|S_{11}-n|^{s/2}\big] \le c \, n^{\max(1,s/4)}
\eeao
and therefore it is easy to conclude that $\Psi_n=O((\log n)^{s/2}n^{-(1/4) \min(s-2,2)}) \to 0$ as $\nto$.
\par
Next, assume condition (C3'). By \cite[Section VIII.4, No.~8]{petrov:1972}, we have for $0\le x \le n^{\alpha}/\rho(n)$ with $0<\alpha\le 1/6$ and $\rho(n)\to \infty$ arbitrarily slowly that
\begin{equation}\label{eq:gsess}
\P \Big( \frac{1}{\sqrt{n}} \Big| \sum_{t=1}^n (X_{1t}^2-1) \Big| > x\Big) \sim 2 \ov \Phi (x/\sqrt{\var(X^{2}}) )\,, \quad \nto\,,
\end{equation}
if $\E\big[\exp\big(|X_{11}^2-1|^{4\alpha/(2 \alpha +1)}\big)\big]<\infty$. We apply this result with $\alpha=1/6$. Then the latter moment requirement reads $\E\big[\exp\big(|X_{11}^2-1|^{1/2}\big) \big]<\infty$ which in view of Lemma \ref{lem:mgf} is implied by (C3'). 
By definition of $\wt d_p$ and $p=\exp(o(n^{1/3}))$, we have
\begin{equation}\label{eq:estd}
\frac{\sqrt{n}}{\wt d_p^2}\sim \frac{\sqrt{n}}{4 \log p} 
>\frac{n^{1/6}}{\rho(n)}
\end{equation}
for any $\rho(n)\to \infty$. Using \eqref{eq:estd}, applying Mill's ratio and \eqref{eq:gsess} yield for a sequence $\rho(n)\to \infty$ sufficiently slowly that as $\nto$
\begin{equation*}
\begin{split}
\Psi_n  &\le p\, \P \Big( \frac{1}{\sqrt{n}} \Big| \sum_{t=1}^n (X_{1t}^2-1) \Big| > \frac{n^{1/6}}{\rho(n)} \Big)\sim 2p \, \ov \Phi \Big(\frac{n^{1/6}}{\rho(n) \sqrt{\var(X^{2})}} \Big)\to 0\,.
\end{split}
\end{equation*}
\par
Finally, assume (C2') and $p=\exp( o(n^{1/3}\wedge g_n^2))$.
We can proceed in the same way as under $(C1)$.
By Lemma~\ref{lem:mgf}, we have $\E[\exp(g(|X|))]<\infty$.
For any $\rho(n)\to \infty$ we have  
\beao
\frac{\sqrt{n}}{\wt d_p^2}>\frac{n^{1/6}}{\rho(n)}\ge \frac{n^{1/6}\wedge g_n'}{\rho(n)}\,.
\eeao
An application of \cite[Theorem~3]{nagaev:1969} shows that $\Psi_n \to 0$. The proof is complete.

\begin{lemma}\label{lem:mgf}
Let $Z,Z'\ge 0$ be iid random variables, $h$ a positive constant and $g$ an increasing \fct\ on $(0,\infty)$ such that $\E[\exp(g(h Z Z'))]<\infty$. Then we have $\E[\exp(g(Z))]<\infty$.
\end{lemma}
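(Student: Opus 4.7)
The plan is to bound $\exp(g(Z))$ from above by a constant multiple of $\exp(g(hZZ'))$ on an event of positive probability that depends only on $Z'$, and then integrate out $Z'$ using independence. The key observation is that if $hZ' \ge 1$ with positive probability, then, since $g$ is increasing, $g(hZZ') \ge g(Z)$ on that event, which is all the monotonicity we need.

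First I would dispose of the trivial case in which $Z$ is essentially bounded: if $Z\le M$ a.s.\ for some $M<\infty$, then $\exp(g(Z))\le \exp(g(M))$ a.s.\ (with the convention $g(0):=\lim_{x\to 0^+}g(x)$, which exists in $[-\infty,\infty)$ because $g$ is monotone), so $\E[\exp(g(Z))]<\infty$ trivially. Otherwise $Z$ is unbounded, hence $\P(Z'\ge a)>0$ for every $a>0$; choose $a\ge 1/h$ and set $c:=\P(Z'\ge a)>0$. Since $g$ is increasing and $haz\ge z$ for $z>0$, we have $g(haz)\ge g(z)$, and using the independence of $Z$ and $Z'$ together with Fubini,
\begin{align*}
\E[\exp(g(hZZ'))]
&\ge \E\bigl[\exp(g(hZZ'))\,\1(Z'\ge a,\,Z>0)\bigr]\\
&\ge \E\bigl[\exp(g(haZ))\,\1(Z>0)\bigr]\,\P(Z'\ge a)\\
&\ge c\,\E\bigl[\exp(g(Z))\,\1(Z>0)\bigr].
\end{align*}
Therefore $\E[\exp(g(Z))\,\1(Z>0)]\le c^{-1}\E[\exp(g(hZZ'))]<\infty$, and the contribution from $\{Z=0\}$ is at most $\exp(g(0))\,\P(Z=0)$, which is finite under the convention above.

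The argument is essentially a one-line coupling, so there is no real obstacle; the only point requiring any care is the interpretation of $g$ at zero and the separate handling of a possible atom of $Z$ at the origin, which is why the case split and the indicator $\1(Z>0)$ are inserted. Nothing else in the hypothesis (e.g.\ differentiability or the rate $g'(x)\le\tau g(x)/x$ assumed elsewhere in the paper) is needed here.
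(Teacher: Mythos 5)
Your proof is correct and is essentially the same argument the paper uses: fix a threshold $a\ge 1/h$ with $\P(Z'\ge a)>0$, then on that event bound $g(hZZ')\ge g(haZ)\ge g(Z)$ using monotonicity, and integrate out $Z'$ by independence (the paper phrases the same computation as $\E[\e^{g(Z)}](1-F(\alpha))\le\int_\alpha^\infty\E[\e^{g(Zht)}]\,\mathrm{d}F(t)\le\E[\e^{g(hZZ')}]$). Your handling of the possible atom at $Z=0$ via the convention for $g(0^+)$ is a minor extra precaution that the paper silently elides; it does not change the substance of the argument.
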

\begin{proof}
If $Z$ is bounded, the claim is trivial. Otherwise there exists $\alpha>1/h$ such that $\P(Z\le \alpha)<1$. Writing $F$ for the distribution function of $Z$, we have
\begin{equation*}
\begin{split}
&\E[\e^{g(Z)}] (1-F(\alpha)) = \int_{\alpha}^\infty \E[\e^{g(Z)}] \dint F(t)\\
&\le \int_{\alpha}^{\infty} \E[\e^{g(Z\, ht)}] \dint F(t) \le \E[\e^{g(h Z Z')}].
\end{split}
\end{equation*}
This implies $\E[\e^{g(Z)}] \le \E[\e^{g(h Z Z')}]/(1-F(\alpha))<\infty$.
\end{proof}

\bibliography{s_arvix}

\end{document}